\documentclass[12pt]{article}
\usepackage{amsmath,amsfonts,amssymb,amsthm,amscd}

\newtheorem{thm}{Theorem}[section]

\newtheorem{cor}[thm]{Corollary}
\newtheorem{lem}[thm]{Lemma}

\newtheorem{obs}[thm]{Observation}

\theoremstyle{definition}
\newtheorem{dfn}[thm]{Definition}
\newtheorem{fct}[thm]{Fact}

\newtheorem{rmk}[thm]{Remark}
\newtheorem{con}[thm]{Convention}

\newtheorem{exa}[thm]{Example}

\parskip=0mm
\oddsidemargin=0cm\evensidemargin=0cm\textwidth=16cm\textheight
=22cm\headheight=0cm\topskip=0cm\topmargin=0cm

\def\x{\bar {x}}

\newcommand{\M}{\sf M}
\newcommand{\N}{\sf N}
\begin{document}
\def\dis{\displaystyle}


\def\dotminus{\mathbin{\ooalign{\hss\raise1ex\hbox{.}\hss\cr
  \mathsurround=0pt$-$}}}

\begin{center}
{\LARGE{\sf  $\aleph_0$-categorical Banach spaces contain $\ell_p$
or $c_0$ }} \vspace{10mm}

{
{\bf \sf Karim Khanaki\footnote{Partially supported by IPM grants
93030032 and 93030059}}} \vspace{3mm}

{\footnotesize
  Department of science,\\
  Arak University of Technology,\\
P.O. Box 38135-1177, Arak, Iran; \\
 e-mail: khanaki@arakut.ac.ir

} \vspace{5mm}
\end{center}

{\sc Abstract.}
{\small  This paper has three parts. First, we establish some of
the basic model theoretic facts about $M_{\mathcal{T}}$, the
Tsirelson space of Figiel and Johnson \cite{FJ}. Second, using
the results of the first part, we give some facts about general
Banach spaces. Third, we study model-theoretic dividing lines in
some Banach spaces and their theories. In~particular, we show:
(1)~$M_{\mathcal{T}}$ has the \emph{non independence property}
(NIP); (2)~every Banach space that is $\aleph_0$-categorical up to
small perturbations embeds  $c_0$ or $\ell_p$ ($1\leqslant
p<\infty$) almost isometrically; consequently  the (continuous)
first-order theory of $M_{\mathcal{T}}$   does not characterize
$M_{\mathcal{T}}$, up to almost  isometric isomorphism.
 }

\medskip

{\small{\sc Keywords}: Tsirelson's space, continuous logic,
strongly separable, stable space, Rosenthal space, non
independence property,  perturbed $\aleph_0$-categorical }

AMS subject classification:  46B04, 46B25, 03C45, 03C98.

{\small \tableofcontents

\bigskip
{\bf References} {\hspace{\stretch{0.01}} \bf \pageref{ref}} }

\section{Introduction}
A famous conjecture in Banach space theory had predicted that
every Banach space contains at least one of the classical spaces
 $c_0$ or $\ell_p$  for some $1\leqslant p<\infty$.
Tsirelson's example \cite{Ts} was the first space not
containing isomorphic copies any of the classical sequence
spaces, and the first space whose norm was defined implicitly
rather than explicitly.
 This phenomenon, i.e. implicit definability, later played an important role in
Banach space theory when new spaces with implicitly defined norms
yielded solutions to many of the most long standing problems in
the theory (see  \cite{Gowers}). Given the fact that all spaces
whose norms are implicitly defined do not contain any of the
classical sequences, some Banach space theorists, such as Gowers
\cite{Gowers, Gowers-weblog} and Odell \cite{Od-2}, asked the
following question:

\medskip
 \noindent\textbf{(Q1)}~``Must an `explicitly defined' Banach space contain
 $\ell_p$ or $c_0$?''
\medskip

To provide a positive answer to this question one must first
provide a precise definition of `explicitly defined space'.
Second, he/she must show that explicitly defined spaces contain
$\ell_p$ for some $1\leqslant p<\infty$, or $c_0$. Third, all
classical Banach spaces such as $L_p$ spaces, Lorentz spaces,
Orlicz spaces, Schreier spaces, etc., are explicitly defined but
the Tsirelson example and similar spaces are not (see
\cite{Gowers-weblog} for more discussions).

On the other hand, the notion of definability plays a basic and
key role in model theory and its applications in algebraic
structures. Recall that a subset $A$ of a first-order structure
$M$ is definable if $A$ is the set of solutions in $M$ of some
formula $\phi(x)$. However, the problem there is that the usual
first-order logic does not work very well for structures in
Analysis.

Chang and Keisler \cite{CK} and Henson \cite{Hen} produced the
{\em continuous logics} and the {\em logic of positive bounded
formulas}, respectively, to study structures in analysis using
model theoretic techniques. Since then, numerous attempts to
provide a suitable logic has been going on (e.g. see \cite{HenM,
HI, Ben-cat}), and eventually led to the creation of the {\em
first order continuous logic} (see \cite{BU, BBHU} for more
details). In many ways the latter logic is the best and ultimate.

One of the main purposes of this article
 is to take a step towards answering Gowers's question using
 logical tools presented in \cite{BU,
BBHU} and to prove that Banach spaces whose norms are determined
by first order statements  (i.e. their first order theories have
exactly one separable model)  contain $\ell_p$ for some
$1\leqslant p<\infty$ or $c_0$.\footnote{In this paper, when  we
say a Banach space is determined by first order statements (or
its theory) we mean its continuous theory is
$\aleph_0$-categorical in the sense of
Definitions~\ref{2definition aleph_0} or \ref{definition aleph_0}
below. That is, the first order axioms determine a unique
separable space up to isometry.} Since the dichotomy
stable/unstable (in both logics, classic and continuous) has been
widely studied in literature, and some model theoretic
properties, such as IP and SOP, in continuous logic (or Banach
space theory) have received less attention, the second goal of
this paper is to provide examples of these concepts. The third
and final purpose is to get a better understanding of
communication between both fields (model theory and Banach space
theory) so that techniques from one field might become useful in
the other.

It is worth pointing out that some of the results in the present
paper, such as  Corollary~\ref{main theorem},  seem to be folklore
to experts in the field, including Ward Henson and Alexander
Usvyatsov.  Of~course, we are not sure that this observation
itself appears somewhere in the literature. On the other hand,
some results, such as Corollary~\ref{Tsirelson->NIP} and Theorem
\ref{perturbed main theorem} and others, seem to be new even to
experts. In fact our main result is stronger than the name of the
paper.

To summarize the results of this paper, in the first part
(Section \ref{NIP on a model}), we define the notion of `NIP on a
model' and study the example of Tsirelson as a guidance. Since, by
the Krivine-Maurey theorem which asserts that every stable space
embeds some $\ell_p$ almost isometrically, Tsirelson's space is
not stable, so we consider a weaker property and ask the following
question: Does Tsirelson's space have NIP?\footnote{See
Definition~\ref{NIP-formula} for the definition of NIP on a
model.} By a result due to E. Odell \cite{Od} which asserts that
the type space of Tsirelson's space is {\em strongly separable},
we show that the answer to the latter question is
positive.\footnote{See
 Definition~\ref{definition strong separable} for the definition of strongly separable space.} On the other
hand, it is  easy to show that an $\aleph_0$-categorical space is
stable if and only if its type space is strongly
separable.\footnote{See Definitions~\ref{2definition aleph_0} and
\ref{definition aleph_0} for the definition of
$\aleph_0$-categorical structures.} Using these observations, we
conclude that the theory of Tsirelson's space (in any countable
language) is not $\aleph_0$-categorical; equivalently, this space
can not be {\em determined} by first order axioms in continuous
logic. This fact leads us to the following question:

\medskip
 \noindent\textbf{(Q2)}~Does an $\aleph_0$-categorical Banach space contain
 $\ell_p$ or $c_0$?
\medskip

In the second part of paper (Section \ref{explicit definable}),
we answer this question. In fact, the answer is ``yes", and this
is  a step towards answering the question \textbf{(Q1)} above;
furthermore we explain why $\aleph_0$-categoricity (up to small
perturbations)  is in some ways the appropriate notion for our
purpose and why this notion is not a complete answer to the
question \textbf{(Q1)}.  We prove that separable structures which
are $\aleph_0$-categorical up to small perturbations contain
$\ell_p$ or $c_0$. We will discuss this subject in detail and
offer other observations that could be interesting in themselves.

In the third part of the paper (Sections \ref{correspondence}),
we  first remark some results of \cite{K4} about
  correspondences between dividing lines in model theory and
Banach space theory which  will be used in the paper. Then, we
study model theoretic dividing lines in some Banach spaces and
their theories.

In Appendix A, we revisit $\aleph_0$-categoricity and
 in Appendix~B, we remark
an observation about Rosenthal's dichotomy communicated to us by
Michael Megrelishvili which asserts that Fact~\ref{fact3} below
is not a dichotomy in noncompact Polish case. To our knowledge
this observation itself does not appear somewhere in literature
in a clear form.

To simplify to read through the paper, we list some more
important observations and results: Theorem  \ref{perturbed main
theorem}, Observations \ref{c_0->IP}, \ref{strong->NIP}, Corollary
\ref{Tsirelson->NIP}, \ref{main theorem},
\ref{strong-not-categorical},    Examples
\ref{c_0,1}, \ref{exam2}, \ref{c_0}.
Corollary~\ref{Tsirelson->NIP}  and Theorem~\ref{perturbed main
theorem} are new.

It is worth recalling another line of research. In
\cite{Iov-2005}, Jose Iovino studied the connection between the
definability of types and the existence of $\ell_p$ and $c_0$ in
the framework of Henson's logic, i.e. the logic of positive
bounded formulas. Theorem~1.1  of \cite{Iov-2005} asserts that the
existence of enough definable types guarantees the existence of
$\ell_p$ or $c_0$ as a subspace.
After preparing of the present paper  we came to know that,
independently from us, Alexander Usvyatsov \cite{Usv} also
observed that $\aleph_0$-categorical spaces contain some classical
sequences. 

This paper is organized as follows: In the second section, we
briefly review  the notions of types in model theory and Banach
space theory. In the third section, we recall the definition of
Tsirelson's space and some facts which are used later. Also, we
present the notion NIP on a model and show that every Banach
space with strongly separable types, as well as Tsirelson's
space, has NIP.  In the forth section, we prove  that every
perturbed $\aleph_0$-categorical  space embeds $c_0$ or $\ell_p$
almost isometrically  (Theorem~\ref{perturbed main theorem}). In
the fifth section, we briefly review some model theoretic dividing
lines and their connections to some functional analysis notions,
and study some examples and investigate their model theoretic
dividing lines. In Appendix~A, we revisit $\aleph_0$-categoricity
and  in Appendix~B, we give some remarks on Rosenthal's dichotomy.

\bigskip\noindent
{\bf Acknowledgements.} I am very much indebted to  C. Ward
Henson for his kindness and his helpful and detailed comments. I
want to thank Alexander Berenstein, Gilles Godefroy, Timothy
Gowers, Richard Haydon, Tom\'{a}s Ibarluc\'{i}a, Michael
Megrelishvili and Alexander Usvyatsov for their valuable comments
and observations.
I am grateful to Thomas Schlumprecht for sending a copy of
\cite{Od} to me. I thank the anonymous referee for his detailed
suggestions and corrections; they helped to improve significantly
the exposition of this paper.

 I would like to thank the Institute for Basic
Sciences (IPM), Tehran, Iran. Research partially supported by IPM
grants  93030032 and 93030059.

\section{Preliminaries from functional analysis} \label{Preliminary}
In this section we review some basic notions from functional
analysis.  Further details can be found in \cite{AK}.

A Banach space $M=(M,\|\cdot\|)$ is a complete normed linear
space. The dual (resp. bidual) space of $M$ is denoted by $M^*$
(resp. $M^{**}$).  Recall that a Banach space $X$ is {\em
reflexive} if the mapping  $\widehat{\ }:X\to X^{**}$, given by
$\hat{x}(x^*)=x^*(x)$, is an isometry of $X$ onto $X^{**}$.

Let $1\leqslant p<\infty$. $\ell_p$ is the Banach space of all
sequences $x=(x_n)$ of reals so that
$\|x\|_p=(\sum_1^\infty|x_n|^p)^{1/p}<\infty$. $c_0$ is the Banach
space of all null sequences  $(x_n)$ under
$\|(x_n)\|_\infty=\sup_n|x_n|=\max_n|x_n|$. The (nonseparable)
Banach space $\ell_\infty$ is the space of all bounded sequences
$(x_n)$ under $\|(x_n)\|_\infty=\sup_n|x_n|$.

\medskip
Let $M$ be a Banach space. The {\em weak topology of $M$}  is the
weakest topology on $M$ such that each $x\in M^*$ is continuous.
That is, any net $(x_\alpha)$ converges weakly to $x_0\in M$ if
and only if for each $x^*\in M^*$, $x^*(x_\alpha)\to x^*(x_0)$.

\medskip
Let $X$ be a compact Hausdorff space. The space of continuous
real-valued functions on $X$ is denoted by $C(X)$. Since $X$ is a
compact space, every $f\in C(X)$ is bounded and $C(X)$ is a
complete normed linear space with the supremum norm. A net
$(f_\alpha)\in C(X)$ converges {\em pointwise} to a function
$f\in{\Bbb R}^X$ if and only if for each $x\in X$,
$|f_\alpha(x)-f(x)|\to 0$.  It is a well-known fact that for each
compact topological  space $X$, the weak topology and the
pointwise convergence topology on {\em norm-bounded} subsets of
$C(X)$ are the same (see \cite[462F]{Fremlin4}).

\begin{dfn} \label{weak compact}  Let $M$ be a Banach space and $A\subseteq M$.

\noindent (i) $A$  is  {\em relatively weakly compact} if it has
compact closure in the weak topology on $M$.

\noindent (ii)  $A$  is  {\em relatively weakly sequentially compact}
if every sequence $(x_n)$ of it has a subsequence $(y_n)$ such
that it weakly converges to an element of $M$.
\end{dfn}

Let $X$ be compact and  $A\subseteq C(X)$ norm-bounded. Then, as
the weak topology and the poinwise convergence topology on $A$
are the same,  $A$ is relatively weakly compact if it has compact
closure in the pointwise convergence topology on $C(X)$.
$A\subseteq C(X)$ is relatively weakly sequentially compact if
every sequence of it has
 a  pointwise convergent subsequence such that its limit is a
continuous function.

\begin{dfn} \label{RSC-WS-complete}
Let $M$ be  Banach space and $A$ norm-bounded subset of it.

\noindent (i) $A$ is {\em Rosenthal} if every  sequence in $A$
has a weak Cauchy subsequence. $M$ is called {\em Rosenthal} if
its unit ball is Rosenthal. In particular,  if $M=C(X)$ (where
$X$ is compact) and  $A\subseteq C(X)$ is Rosenthal, we say that
$A$  {\em is  relatively (weakly) sequentially compact in ${\Bbb
R}^X$} (short RSC). That is, every bounded sequence in $A$ has a
pointwise convergent subsequence in ${\Bbb R}^X$.

\noindent (ii)
 $A$ is said
to be {\em weakly sequentially complete} (short WS-complete)  if
every weak Cauchy sequence of $A$ has a weak limit (in $M$). $M$
is called {\em weakly sequentially complete}  if its unit ball is
weakly sequentially complete.  In particular, if
 $M=C(X)$ (where $X$ is compact) and  $A\subseteq C(X)$,
 we say that $A$ has the {\em weak sequential completeness
property} (or short SCP) if the limit of each pointwise
convergent sequence $(f_n)\subseteq A$ is continuous.
\end{dfn}

\begin{fct}[Eberlein--\v{S}mulian Theorem] \label{ES theorem} Let $M$ be  Banach space and
 $A\subseteq M$  norm-bounded. Then  the following are equivalent:
\begin{itemize}
  \item [(1)]   $A$  is   relatively weakly compact in $M$.
  \item [(2)] The following two properties hold:
   \begin{itemize}
           \item [(i)]
             $A$ is Rosenthal, and
           \item [(ii)]
              $A$ is weakly sequentially complete.
   \end{itemize}
\end{itemize}
\end{fct}

\noindent {\em Explanation.} See \cite{W} for a short proof of the
Eberlein--\v{S}mulian Theorem. (See also Theorem 1.6.3 of
\cite{AK}.)  Notice that, (2) is precisely  the condition
\textrm{B} in the main theorem of \cite{W}. Indeed, every
sequence $(x_n)$ has a subsequence $(y_n)$ such that it weakly
converges to an element of $M$ (i.e. relatively weakly
sequentially compact) if and only if (i) each sequence has a
weakly Cauchy subsequence (i.e. Rosenthal), and (ii)  every weak
Cauchy sequence has a weak limit in $M$ (i.e. weakly sequentially
complete).
 Clearly, (1)  is the condition \textrm{A}
in \cite{W}.

\medskip

\section{Types} \label{section-type}
We assume that the reader is familiar with continuous logic (see
\cite{BBHU} or \cite{BU}). We will study the notion of type in
continuous logic and its connection with the notion of type in
Banach space theory. In this paper, local types are more
important for us.

\subsection*{Types in model theory}
Although in the current paper we study unbounded metric
structures (i.e. Banach spaces) but without loss of generality we
can assume that all structures are bounded and so usual framework
of continuous logic is sufficient for our purposes.

\begin{con}
 In this paper, we study bounded metric
structures. More precisely, we study the unit balls  of   Banach
spaces. Furthermore, each formula can have an arbitrary bound,
although our focus is on the formula $\phi(x,y)=\|x+y\|$. So, we
can assume that the atomic formulas are $[0,2]$-valued.
\end{con}

As mentioned model theory notation is standard \cite{BBHU}. We
fix an $L$-formula $\phi(x,y)$ and a complete   $L$-theory $T$.
 We let $\tilde\phi(y, x) = \phi(x, y)$.    Let $M$ be an $L$-structure, $A\subseteq M$ and
$T_A=Th({M}, a)_{a\in A}$. Let $p(x)$ be a set of
$L(A)$-statements in free variable $x$. We shall say that $p(x)$
is a {\em type  over} $A$ if $p(x)\cup T_A$ is satisfiable. A
{\em complete type over} $A$ is a maximal type over $A$. The
collection of all such types over $A$ is denoted by $S^{M}(A)$,
or simply by $S(A)$ if the context makes the theory $T_A$ clear.
The {\em type of $a$ in $M$ over $A$}, denoted by
$\text{tp}^{M}(a/A)$, is the set of all $L(A)$-statements
satisfied in $M$ by $a$. If $\phi(x,y)$ is a formula, a {\em
$\phi$-type} over $A$ is a maximal consistent set of  statements
of the form $\phi(x,a)\geqslant r$ and $\phi(x,a)\leqslant s$, for
$a\in A$ and $r,s\in\mathbb{R}$. The set of $\phi$-types over $A$
is denoted by $S_\phi(A)$. Similarly, we define
$S_{\tilde\phi}(A)$.


\subsubsection*{The logic topology and $\rho$-metric}
We now give a characterization of complete types in terms of
functional analysis. Let $\mathcal{L}_A$ be the family of all
interpretations $\phi^{M}$ in $M$ where $\phi$ is an
$L(A)$-formula with a free variable $x$. Then $\mathcal{L}_A$ is
an Archimedean Riesz space of measurable functions on $M$ (see
\cite{Fremlin3}). Let $\sigma_A({M})$ be the set of Riesz
homomorphisms $I: {\mathcal L}_A\to \mathbb{R}$ such that
$I(\textbf{1}) = 1$. The set $\sigma_A({M})$ is   called the {\em
spectrum} of $T_A$. Note that $\sigma_A({M})$ is a weak* compact
subset of $\mathcal{L}_A^*$. The next proposition shows that a
complete type can be coded by a Riesz homomorphism and gives a
characterization of complete types. In fact, by compactness
theorem, the map $S^{M}(A)\to\sigma_A({M})$, defined by $p\mapsto
I_p$ where $I_p(\phi^M)=r$ if $\phi(x) = r$ is in $p$, is a
bijection.


\begin{rmk} \label{rem2} For an Archimedean Riesz space $U$  with order unit
$\textbf{1}$, write $X$ for the set of all normalized Riesz
homomorphisms from $U$ to $\Bbb R$, or equivalently,
 the positive extreme points of unit
ball of the dual space $U^*$, with its weak* topology. Then $X$ is
compact by Alaoglu's theorem and the
 natural map $u\mapsto \hat{u}:U\to \mathbb{R}^X$
defined by setting $\hat{u}(x)=x(u)$ for  $x\in X$ and $u\in U$,
is an embedding from $U$ to an order-dense and norm-dense
embedding subspace of $C(X)$.
\end{rmk}

Now, by the above remark, $\mathcal L_A$ is dense in $C(S^M(A))$.
It is easy to show that:

\begin{fct} \label{key}
Suppose that $M$, $A$ and $T_A$ are as above.
\begin{itemize}
             \item [(i)] The map $S^{M}(A)\to\sigma_A({M})$  defined by $p\mapsto I_p$ is bijective.
             \item [(ii)] $p\in S^{M}(A)$ if and only if there is an elementary
extension $N$ of $M$ and $a\in N$ such that
$p=\text{tp}^{N}(a/A)$.
\end{itemize}
\end{fct}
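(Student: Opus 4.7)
The plan is to handle the two parts separately, with part~(i) carrying the substantive content and part~(ii) reducing to the compactness theorem for continuous logic.

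For part~(i), I would first verify that $p\mapsto I_p$ is well-defined: completeness of $p$ means that for each $L(A)$-formula $\phi(x)$ there is a unique real $r$ with the condition ``$\phi(x)=r$'' in $p$, so $I_p\colon\mathcal{L}_A\to\mathbb{R}$ is unambiguous. The Riesz homomorphism properties and the normalization $I_p(\textbf{1})=1$ follow because the continuous connectives ($+$, scalar multiplication, $\max$, $\min$, absolute value) act pointwise on interpretations in $M$ and a complete type is closed under them. Injectivity is then immediate: two complete types that assign the same value to each $\phi^M$ must contain the same statements $\phi(x)=r$.

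Surjectivity is the substantive part. Given $I\in\sigma_A(M)$, I would set
\[
p=\{\phi(x)=I(\phi^M):\phi\text{ an }L(A)\text{-formula}\}
\]
and show $p$ is consistent with $T_A$. Any finite fragment $\{\phi_i(x)=r_i\}_{i\leqslant n}$ with $r_i=I(\phi_i^M)$ is controlled by $\psi=\max_i|\phi_i-r_i\cdot\textbf{1}|\in\mathcal{L}_A$, and $I(\psi)=\max_i|I(\phi_i^M)-r_i|=0$ because $I$ is a Riesz homomorphism. If the fragment failed to be approximately realizable in $M$, the compactness criterion in continuous logic would give $\delta>0$ with $\psi^M(a)\geqslant\delta$ for every $a\in M$, i.e.\ $\psi-\delta\cdot\textbf{1}\geqslant 0$ in $\mathcal{L}_A$; positivity of $I$ (automatic for Riesz homomorphisms) would then force $I(\psi)\geqslant\delta>0$, contradicting $I(\psi)=0$. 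Hence each finite fragment is realizable in an elementary extension, compactness yields a realization of the whole $p$, and by construction the resulting complete type satisfies $I_p=I$. A conceptually cleaner framing, invoking Remark~\ref{rem2}, is that $\mathcal{L}_A$ embeds densely in $C(S^M(A))$ and the normalized Riesz homomorphisms on such a dense Riesz subspace are precisely the evaluation functionals at points of the compact space $S^M(A)$.

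For part~(ii), the direction from realized types to $S^M(A)$ is by definition. For the converse, given $p\in S^M(A)$, I would expand the language by a fresh constant symbol $c$ and form
\[
T^{*}=T_A\cup\{\phi(c)=r:(\phi(x)=r)\in p\}.
\]
Finite satisfiability of $T^{*}$ is exactly the consistency of finite fragments of $p$, which is built into $p$ being a type; the compactness theorem produces a model of $T^{*}$, whose reduct is an elementary extension $N\succ M$ in which $c^{N}$ realizes $p$. The principal obstacle, as I see it, is the positivity-versus-infimum bridge in the surjectivity step of~(i): one must transform the \emph{algebraic} data of a Riesz homomorphism on $\mathcal{L}_A$ into the \emph{semantic} existence of a realizer, and the whole argument hinges on combining the continuous-logic compactness theorem with the fact that a strictly positive function cannot be killed by a positive homomorphism. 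Once that bridge is built, the remaining steps are bookkeeping, with some care required to respect the $[0,2]$-valued convention the authors adopt for atomic formulas.
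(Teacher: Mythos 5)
The paper states Fact~\ref{key} without a written proof (it is asserted right after Remark~\ref{rem2}, with the one-line indication ``by compactness theorem''), and your treatment of part~(i) is a correct fleshing-out of exactly that intended route: well-definedness and injectivity are bookkeeping, and your surjectivity argument --- pass from the Riesz homomorphism $I$ to the candidate type $p$, control a finite fragment by $\psi=\max_i|\phi_i-r_i\cdot\textbf{1}|$, note $I(\psi)=0$, and use positivity of $I$ against $\inf_{a\in M}\psi^M(a)=\delta>0$ to get approximate realizability, then invoke the continuous-logic compactness theorem --- is sound and is the same bridge the paper gestures at via Remark~\ref{rem2} (normalized Riesz homomorphisms as point evaluations on the Yosida representation).

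In part~(ii), however, there is a genuine gap as written. From $T^{*}=T_A\cup\{\phi(c)=r:(\phi(x)=r)\in p\}$ the compactness theorem gives only a model $N^{*}\vDash T_A$ realizing $p$; since $T_A=\mathrm{Th}(M,a)_{a\in A}$, such a model is elementarily \emph{equivalent} to $M_A$ but need not contain $M$ at all, so its reduct is not automatically an elementary extension $N\succeq M$ --- yet producing an elementary extension of $M$ is precisely the content of~(ii), given that ``$p$ is a type'' already means $p\cup T_A$ is satisfiable. The standard repair is to work instead with the elementary diagram: show $\mathrm{ElDiag}(M)\cup\{\phi(c)=r:(\phi(x)=r)\in p\}$ is finitely (approximately) satisfiable, by taking a finite fragment, existentially quantifying out the parameters from $M\setminus A$ occurring in the diagram conditions, and using the completeness of $T_A$ to see that the resulting $L(A)$-conditions already follow from $T_A$, so that inconsistency of the fragment would contradict the consistency of $p$ with $T_A$. (Alternatively, realize $p$ in some $N_0\vDash T_A$ as you do, and then elementarily amalgamate $N_0$ and $M_A$ over $A$ --- but that amalgamation is itself a compactness argument with both elementary diagrams, so nothing is saved.) With that substitution your proof of~(ii) goes through; the rest of the proposal is fine.
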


We equip $S^{M}(A)=\sigma_A({M})$  with the related topology
induced from $\mathcal{L}_A^*$. Therefore, $S^{M}(A)$ is a compact
and Hausdorff space. For any complete type $p$ and formula
$\phi$, we let $\phi(p)=I_p(\phi^{M})$. It is easy to verify that
the topology on $S^{M}(A)$ is the weakest topology in which all
the functions $p\mapsto \phi(p)$ are continuous. This topology
sometimes called the {\em logic topology}. The same things are
true for $S_\phi(A)$.

\begin{dfn}[The $\rho$-metric] The space $S_\phi(M)$ has another
topology. Indeed, define $\rho(p,q)=\sup_{a\in M, \|a\|\leq 1
}|\phi(p,a)-\phi(q,a)|$. Clearly, $\rho$ is a metric on
$S_\phi(M)$ and it is called the {\em $\rho$-metric } on the type
space $S_\phi(M)$.
\end{dfn}

\begin{rmk} \label{uniform top}
For unbounded logics, we define $\rho_u(p,q)=\sup_{a\in
M}|\phi(p,a)-\phi(q,a)|$.  The $\rho_u$-metric sometimes is called
the {\em uniform topology}. In this case, there is another
topology on the type space, namely the {\em strong topology}.
Indeed, define
$\rho_s(p,q)=\sum_1^k\frac{1}{2^k}\sup_{\|a\|\leqslant k}
|\phi(p,a)-\phi(q,a)|$. For the formula $\phi(x,y)=\|x+y\|$, R.
Haydon has informed us that he has constructed a Banach space $M$
for which  the space of $\phi$-types over $M$
 is strongly separable but not uniformly separable.
Despite this, for bounded continuous logic, and hence in this
paper, the $\rho$-metric and the strong and uniform topologies
are the same.
\end{rmk}

\begin{dfn}  A (compact) \emph{topometric} space is a triplet $\langle X, \frak{T},\frak{d}\rangle$,
  where $\frak T$ is a
  (compact) Hausdorff topology and $\frak d$ a metric on $X$, satisfying:
  (i) The metric topology refines the topology.
  (ii) The metric ${\frak d}:X^2\to[0,\infty]$ is lower semi-continuous,
  that is, for all $r\in{\Bbb R}^+$ the set $\{(a,b)\in X^2: {\frak d}(a,b)\leq r\}$ is closed in
  $(X^2,\tau\times\tau)$, where $\tau\times\tau$ is the product topology.
\end{dfn}

\begin{fct}  \label{topometric fact}  Let $M$ be a structure and $\phi(x,y)$ a formula. The triplet  $\langle S_\phi(M), \tau,\rho\rangle$ is
a compact topometric space where $\tau$ and $\rho$ are the logic
topology and the $\rho$-metric on the type space $S_\phi(M)$,
respectively.
\end{fct}
\begin{proof} Since $(S_\phi(M), \tau)$ is compact, it suffices to
show that $\rho$ is lower semi-continuous. (See Lemma~1.4 in
\cite{Ben-Cantor}.) Indeed, suppose that $\kappa$ is a cardinal
number and $(p_\alpha,q_\alpha)\in (S_\phi(M))^2$,
$\alpha<\kappa$, such that $\rho(p_\alpha,q_\alpha)\leq r$ for all
$\alpha<\kappa$. Let $\mathcal U$ be an ultrafilter on $\kappa$.
It is a well-known fact that, since $S_\phi(M)$ is $\tau$-compact,
there are $p,q\in S_\phi(M)$ such that the $\mathcal U$-limit of
$(p_\alpha,q_\alpha),\alpha<\kappa$ is $(p,q)$. (Recall that the
$\mathcal U$-limit of $(x_\alpha)_{\alpha<\kappa}$  is $x$ if for
every neighborhood $V$ of $x$, the set $\{\alpha:x_\alpha\in V\}$
is in $\mathcal U$. In this case, we write $\lim_{\mathcal
U}x_\alpha=x$.) Therefore,
\begin{align*}
\rho(p,q) &= \sup_{\|b\|\leq 1}|\phi(p,b)-\phi(q,b)|\\
&= \sup_{\|b\|\leq 1}\lim_{\mathcal U}|\phi(p_\alpha,b)-\phi(q_\alpha,b)|\\
&\leq \lim_{\mathcal U}\sup_{\|b\|\leq
1}|\phi(p_\alpha,b)-\phi(q_\alpha,b)|\leq r.
\end{align*}
As $\kappa$ and $\mathcal U$ are arbitrary, $\rho$ is lower
semi-continuous.
\end{proof}

Note that $(S_\phi(M),\tau)$ is the weakest topology in which all
functions $p\mapsto\phi(p,a)$, $a\in M$, are continuous, and
 all functions  $p\mapsto\phi(p,a)$, $a\in M$, are uniformly continuous relative
to  $\rho$  with a {\em modulus of uniform continuity}
$\Delta_\phi$ (see \cite{BBHU}).

\subsection*{Types in Banach space theory}
Let $M$ be a Banach space and $a$ be in the unit ball $B_M=\{x\in
M:\|x\|\leqslant 1\}$. In Banach space theory, a function
$f_a:B_M\to[0,2]$ defined by $x\mapsto\|x+a\|$ is called a {\em
trivial type over the unit ball of $M$}. A {\em type on the  unit
ball of $M$} is a pointwise limit of a family of trivial types.
The set $\mathcal{S}(M)$ of all types over the unit ball of  $M$
is called the {\em space of types}. In fact
$\mathcal{S}(M)\subseteq[0,2]^M$ is a compact topological space
with respect to the product topology. This topology is called the
{\em weak topology} on types, and denoted by $\tau'$.

$\mathcal{S}(M)$ has another topology. Define
$\rho'(f,g)=\sup_{a\in M,\|a\|\leq 1}|f(a)-g(a)|$ for $f,g\in
\mathcal{S}(M)$. Clearly, $\rho'$ is a metric on $\mathcal{S}(M)$.

\begin{rmk} Notice that, in the original defintion of types in the Banach space
theory, types are on the entire space (not just the unit ball),
and to be realized by anything in the space not just in the unit
ball. However, this does not affect the results of this paper,
because we use {\em positive} results for the original definition.
As an example, strong separability in the sense of \cite{Od}
implies strong separability in Definition~\ref{definition strong
separable} below.
\end{rmk}


\begin{fct}
The triplet $\langle \mathcal{S}(M),\tau',\rho'  \rangle$ is a
compact topometric space.
\end{fct}
\begin{proof}
The proof is a  straightforward adaptation of the proof of
Fact~\ref{topometric fact}.
\end{proof}

Let $\phi(x,y)=\|x+y\|$. Then there is a correspondence between
$S_\phi(M)$ and $\mathcal{S}(M)$. Indeed, let $a\in B_M$, and
consider the quantifier-free type $a$ over $B_M$, denoted by
$\mathrm{tp}_{\mathrm{qf}}(a/B_M)$. It is easy to verify that
$\mathrm{tp}_{\mathrm{qf}}(a/B_M)$ corresponds to the function
$f_a:B_M\to[0,2]$ defined by $x\mapsto\|x+a\|$. Also, there is a
correspondence between $\mathrm{tp}_{\mathrm{qf}}(a/B_M)$ and the
Riesz homomorphism $I_a:{\mathcal{L}_M}\to[0,2]$ defined by
$I_a(\phi(x,y))=\phi^{\M}(x,a)$ for all $x\in B_M$. To summarize,
the following maps are bijective:

$$\mathrm{tp}_{\mathrm{qf}}(a/B_M)\rightsquigarrow f_a$$
$$\mathrm{tp}_{\mathrm{qf}}(a/B_M)\rightsquigarrow I_a$$

Now, it is easy to check that their closures are the same:

\begin{fct}
Assume that $M$ is a Banach space and $\phi(x,y)=\|x+y\|$. The
compact topometric spaces $\langle S_\phi(M),\tau,\rho \rangle$
and $\langle \mathcal{S}(M),\tau',\rho' \rangle$ are the same.
More exactly, $(S_\phi(M),\tau)$ and $(\mathcal{S}(M),\tau')$ are
homeomorphic, and $(S_\phi(M),\rho)$ and $(\mathcal{S}(M),\rho')$
are isometric.
\end{fct}

\section{Tsirelson's space and NIP} \label{NIP on a model}
 In this section we show that the  Tsirelson space has a property weaker than stability, namely the non independence property (NIP).
First we remind the notion NIP on a model from \cite{K3}.

\subsection*{NIP on a model}

\begin{dfn} \label{NIP-formula}
(i) Let $\M$ be a structure, and $\phi(x,y)$ a formula. We say
that $\phi(x,y)$ has the {\em independence property (short IP)  on
$M$} if there are a sequence $(a_n)\subseteq M$, and $r>s$, such
that for each {\em finite} disjoint subsets $E,F$ of ${\Bbb N}$:
$$\Big\{b\in M:\big( \bigwedge_{n\in E}\phi^{\M}(a_n,b)\leqslant
  s\big)\wedge\big(\bigwedge_{n\in F}\phi^{\M}(a_n,b)\geqslant r\big)\Big\}\neq\emptyset.$$
  In this case,
  we say that $(\phi^{\M}(a_n,y):i<\omega)$ is an independent sequence.
 We say that $\phi(x,y)$ has  {\em  NIP}
   on $M$ if it does not have IP on $M$.
  \noindent
  (ii) A complete theory $T$ has IP if
there are a formula $\phi$ and a model $M$ of it such that $\phi$
has IP on $M$, and otherwise it is said that $T$ has NIP.
\end{dfn}

\begin{lem} \label{equivalence}   Let $M$ be a structure, and $\phi(x,y)$ a formula.
Then the following are equivalent.
\begin{itemize}
            \item [(i)]  $\phi(x,y)$ has NIP on $M$.
           \item [(ii)] For each sequence $(a_n)\subseteq M$, each elementary extension ${\N}\succeq{\M}$, and
                       $r>s$, there are some {\em possibly infinite} disjoint subsets $E,F$ of ${\Bbb N}$ such that $$\Big\{b\in N: \big( \bigwedge_{n\in E}\phi^{\N}(a_n,b)\leqslant
                        s\big)\wedge\big(\bigwedge_{n\in F}\phi^{\N}(a_n,b)\geqslant r\big)\Big\}=\emptyset.$$
           \item [(iii)] For each sequence $\phi(a_n,y)$ in the set
                        $A=\{\phi(a,y):S_{\tilde\phi}(M)\to{\Bbb R}~|~a\in M\}$, where $S_{\tilde\phi}(M)$ is
                        the space of all complete ${\tilde\phi}$-types on $M$, and $r>s$
                        there are some {\em finite} disjoint subsets $E,F$ of ${\Bbb N}$ such that  $$\Big\{q\in S_{\tilde\phi}(M):\big( \bigwedge_{n\in E}\phi(a_n,q)\leqslant
                        s\big)\wedge\big(\bigwedge_{n\in F}\phi(a_n,q)\geqslant r\big)\Big\}=\emptyset.$$
           \item [(iv)] The condition (iii) holds for
                          {\em arbitrary} disjoint subsets $E,F$ of ${\Bbb N}$.
           \item [(v)] Every sequence $\phi(a_n,y)$ in $A$ has a pointwise convergent subsequence.
\end{itemize}
\end{lem}
\begin{proof}
See Lemma~3.12 in \cite{K3} for a proof.
\end{proof}


\begin{rmk} (i) A formula $\phi$ has NIP for a theory $T$ (in the obvious sense) iff it has NIP on every
model $\M$ of $T$ iff it has NIP on some approximately
$\omega$-saturated model of $T$. (See Definition~1.3 in \cite{BU}
for the definition of approximate $\omega$-saturation.)
\newline
(ii) Note that for each separable structure $\M$, every sequence
$\phi^{\M}(a_n,y):M\to\Bbb R$ has a convergent subsequence (see
Lemma~\ref{diagonal-lemma} below), but this does not imply that
every sequence $\phi(a_n,y):S_{\tilde\phi}(M)\to\Bbb R$ in $A$ has
a convergent subsequence (see Examples~\ref{c_0,1} and \ref{c_0}
below).
\newline
(iii) The notion of NIP on a model was introduced in \cite{K3}.
Recently, in \cite{KP} some equivalences of this notion are
given. For example, it is shown that a formula $\phi$ has NIP on
$\M$ iff every coheir of a $\phi$-type over $\M$ is Borel (Baire
1) definable.
\end{rmk}

\begin{dfn} A Banach space (or Banach structure) $\M$ has  {\em NIP} (resp. \emph{IP}) if
the formula $\phi(x,y)=\|x+y\|$ has NIP (resp. IP) on $\M$.
\end{dfn}

 The following example shows that $c_0$ has  IP. Also, we will see
later that this example shows that Fact~\ref{fact3} below is not
dichotomy in noncompact spaces (see Example~\ref{c_0} below). We
thank Michael Megrelishvili for communicating to us the example.

\begin{exa} \label{c_0,1}
 Let $c_0=\{x=(x_n)_n\in\ell_\infty:\lim_nx_n=0\}$ with
$\|x\|=\sup_{n\in{\Bbb N}}|x_n|$. Let $B_{c_0}$ be the unit ball
of $c_0$, i.e. $B_{c_0}=\{x\in c_0:\|x\|\leqslant 1\}$. For each
$a\in B_{c_0}$, define  $f_a:B_{c_0}\to[0,2]$ by $x\mapsto
\|x+a\|$. Let ${\mathcal F}_{c_0}=\{f_a:a\in B_{c_0}\}$.  Then the
family ${\mathcal F}_{c_0}$ contains an independent sequence.
Indeed, for each $n$, define $f_n(x)=\|x+e_n\|$ for all $x$ where
$(e_n)_n$ is the standard basis of $c_0$. We show that $(f_n)_n$
is an independent sequence: for every   finite disjoint subsets
$I$ and $J$ of $\Bbb N$ (the naturals) define a binary vector
$x\in c_0$ as the characteristic function of $J$: $x_j=1$ for
every $j\in J$ and $x_k=0$ for every $k\notin J$ (in particular,
for every $i\in I$). Then $x\in c_0$, $\|x\| = 1$ and $f_i(x)=1$
for every $i\in I$, $f_j(x)=2$ for every $j\in J$. Therefore,
 $(f_n)_n$ is an independent sequence.
\end{exa}

\begin{obs} \label{c_0->IP}  Suppose that $X$ is a Banach space structure
containing a subspace isomorphic to $c_0$.Then $X$ has the
independence property (IP).
\end{obs}

\begin{proof}
  Recall that every space isomorphic to $c_0$
 has subspaces that embed  $c_0$ almost isometrically. (See Proposition~2.e.3 in
 \cite{LT}.) So, the sequence $(f_n)_n$ in Example~\ref{c_0,1} works
 well.
\end{proof}

The converse does not hold, and we will give a counterexample  in
a future work.


\bigskip
We now recall the notion of non order property (NOP) or stability
on a model.

\begin{dfn} \label{stable-formula}
Let $\M$ be a structure, and $\phi(x,y)$ a formula. The following
are equivalent and in any of the cases we say that $\phi(x,y)$
{\em is stable on $M$} (or {\em has NOP on $M$}).
\begin{itemize}
             \item [(i)] Whenever $a_n,b_m\in M$ form two sequences we have $$\lim_n\lim_m\phi(a_n,b_m)=\lim_m\lim_n\phi(a_n,b_m),$$
                               when the limits on both sides exist.
             \item [(ii)]  The set $A=\{\phi(x,b):S_x(M)\to{\Bbb R}~|b\in M\}$ is relatively weakly compact in
             $C(S_x(M))$. (Cf. Definition~\ref{weak compact}
             above.)
\end{itemize}
\end{dfn}

\begin{rmk}
(i) The equivalence (i)~$\Leftrightarrow$~(ii) of
Definition~\ref{stable-formula} is called the Eberlein--
Grothendieck  criterion (see~\cite{K3}). In \cite{KM}, stability
is only defined for the  formula $\|x+y\|$, and it is called
stability for the whole  space (and not NOP for the formula
$\|x+y\|$). Definition~\ref{stable-formula}(i) was based on Shelah
general notion of NOP or stability of a formula. The general
analogue of NOP in continuous setting appeared for the first time
in \cite{BU}.

\noindent (ii) A formula $\phi$ has NOP for a theory $T$ (in the
obvious sense) iff it has NOP on every model $\M$ of $T$. A
theory has NOP (or is stable) if every formula has NOP for the
theory.

\end{rmk}

\begin{dfn} A Banach space (or Banach structure) $\M$ is {\em stable} (or {\em has NOP}) if
the formula $\phi(x,y)=\|x+y\|$ is stable on $\M$.
\end{dfn}

 The
following is a consequence of the well-known compactness theorem
of Eberlein and \v{S}mulian (Fact~\ref{ES theorem}). Indeed, if
the set $A$ above is relatively weakly compact in $C(S_x(M))$,
then every sequence in $A$ has a pointwise convergent subsequence
in ${\Bbb R}^{S_x(M)}$, namely $A$ is  relatively sequentially
compact in ${\Bbb R}^{S_x(M)}$.  (Cf.
Definition~\ref{RSC-WS-complete}(i).)

\begin{fct}[\cite{K3}, Fact~3.4]
Let $\M$ be a structure, and $\phi(x,y)$ a formula. If $\phi$ is
stable on $\M$ then $\phi$ has NIP on $\M$. In particular, every
stable Banach space has NIP.
\end{fct}

As $L_p[0,1]$  ($1\leqslant p<\infty$) embeds $\ell_p$, and the
former is stable (cf. \cite[Theorem~17.11]{BBHU}), so $\ell_p$ is
stable. Therefore, by the above fact, $\ell_p$  ($1\leqslant
p<\infty$) has NIP.

\subsection*{Tsirelson's space} \label{Tsirelson's space}
 Although we only use some known properties of
Tsirelson's example \cite{Ts} or actually the dual space of the
original example as described by Figiel and Johnson \cite{FJ},
for the sake of completeness, we present Tsirelson's space and
remind some its properties which are used in this paper. The key
property for our purpose will be presented in
Fact~\ref{Odell-fact}.

A collection $(E_i)_1^n$ of finite subsets of natural numbers is
called {\em admissible} if $n\leqslant E_1<E_2<\cdots<E_n$. By
$E<F$ we mean $\max E<\min F$ and $n\leqslant F$ means
$n\leqslant\min F$. For $x\in c_{00}$ and $E\subseteq {\Bbb N}$
by $Ex$ we mean the restriction of $x$ to $E$, i.e. $Ex(n)=x(n)$
if $n\in E$ and $0$ otherwise.

Now we recall the definition $M_{\mathcal{T}}$ (the Tsirelson of
\cite{FJ}). For all $x\in c_{00}$ set
$$\|x\|=\max\Big(\|x\|_\infty,~\sup\Big\{\frac{1}{2}\sum_{i=1}^n\|E_ix\|: (E_i)_1^n \text{ is admissible} \Big\}\Big).$$
$M_{\mathcal{T}}$ is then defined to be the completion of
$(c_{00},\|\cdot\|)$. Note that this norm is given implicitly
rather than explicitly.

One can define this norm on $c_{00}$ by induction. Indeed, for
$x\in c_{00}$, set $\|x\|_0=\|x\|_\infty$ and inductively
$$\|x\|_{n+1}=\max\Big(\|x\|_n,~\sup\Big\{\frac{1}{2}\sum_{i=1}^n\|E_ix\|_n: (E_i)_1^n \text{ is admissible} \Big\}\Big).$$
Then $\|x\|=\lim_n\|x\|_n$ is the desired norm.

\begin{fct}[\cite{AK}, Theorem~11.3.2] \label{properties}
Tsirelson's space has the following properties.
\begin{itemize}
             \item [(i)] $M_{\mathcal{T}}$ is reflexive.
             \item [(ii)] $M_{\mathcal{T}}$ does not contain a subspace
             isomorphic to $c_0$ or $\ell_p$ ($1\leqslant
             p<\infty$).
\end{itemize}
\end{fct}

In addition, it is easy to verify that $(e_n)$ is a normalized
{\em unconditional basis} for $M_{\mathcal{T}}$ and any {\em
spreading model} of $M_{\mathcal{T}}$ is isomorphic to $\ell_1$.
See \cite{AK} for definitions and proofs.

The following deep result, due to Krivine and Maurey \cite{KM},
gives a partially answer to the main question of the present
paper. First, recall that:

\begin{dfn} \label{almost isometric}  Let $X, Y$ be two Banach
spaces. (i) $X$ and $Y$  are {\em $\lambda$-isomorphic} (for some
$\lambda\geqslant 1$), if there is a linear isomorphism $T$  from
$X$ onto $Y$ such that for all $x\in X$,
$\lambda^{-1}\|x\|\le\|T(x)\|\le \lambda\|x\|$.

\noindent  (ii)  $X$ and $Y$ are called {\em almost isometric} if
for each $\lambda>1$, they are $\lambda$-isomorphic.

\noindent (iii) {\em  $X$ embeds $Y$ almost isometrically}, if for
each $\lambda>1$ there is a subspace $X_\lambda$ of $X$ such that
$X_\lambda$ and $Y$ are  $\lambda$-isomorphic.
\end{dfn}

\begin{fct}[Krivine--Maurey theorem] \label{Krivine--Maurey}  Every (separable) stable Banach space
embeds $\ell_p$ almost isometrically, for some $1\leqslant
p<\infty$.
\end{fct}

\begin{cor}  \label{unstable}
$M_{\mathcal{T}}$ is not stable.
\end{cor}

\begin{proof}
Immediate by Facts~\ref{properties} and \ref{Krivine--Maurey}.
\end{proof}

A Banach space is  {\em weakly stable} if the condition~(i) in
Definition~\ref{stable-formula} holds whenever $(a_n)$ and
$(b_m)$ are both weakly convergent. It is proved that every weakly
stable space embeds $c_0$ or $\ell_p$, for some $1\leqslant
p<\infty$, almost isometrically (see \cite{ANZ}).  To summarize,
$M_{\mathcal{T}}$ is not stable, even worse, it is not weakly
stable.

\subsection*{Strong separability}  \label{strong
separability} We will show that Tsirelson's space has NIP.
Indeed, we show that every (separable) Banach structure with
strongly separable space of types has NIP. Then the desired
result will be achieved from a deep result of Odell
(Fact~\ref{Odell-fact} below).


\begin{dfn} \label{definition strong separable} We say a (separable) Banach space is {\em strongly separable},
or equivalently {\em its type space is strongly separable}, if
$(S_\phi(M),\rho)$ is separable where $\phi(x,y)=\|x+y\|$ and
$\rho$ is the $\rho$-metric on $\phi$-types.
\end{dfn}

 Recall that, by the Krivine--Maurey theorem, the Tsirelson's space is not stable (see Corollary~\ref{unstable} above).
By Odell's result below, it easy to show that this space has a
weaker property, namely NIP. For this, we recall some notion and
fact.

\medskip
 Recall that a function $f$
form a metric space $X$ to a metric space $Y$ is $k$-Lipschitz
($k\geqslant 0$) if for all $x,y$ in $X$, $d(f(x),f(y))\leqslant
k\cdot d(x,y)$. Now we give the following easy lemma.

\begin{lem}  \label{diagonal-lemma}
Assume that $(X,d)$ is a metric spaces, and $F$ a bounded family
of $1$-Lipschitz functions from $X$ to $\Bbb R$. If $X$ is
separable then every sequence in $F$ has a (pointwise) convergent
subsequence.
\end{lem}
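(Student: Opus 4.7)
The plan is a diagonal extraction in the spirit of the Arzel\`a--Ascoli theorem, with the $1$-Lipschitz hypothesis substituting for equicontinuity. Fix a countable dense subset $D=\{x_k:k\in\mathbb{N}\}\subseteq X$ and a sequence $(f_n)\subseteq F$. Since $F$ is (uniformly) bounded, the scalar sequence $(f_n(x_1))_n$ lies in a compact interval of $\mathbb{R}$, and Bolzano--Weierstrass yields a subsequence $(f_n^{(1)})$ convergent at $x_1$. From $(f_n^{(1)})$ extract $(f_n^{(2)})$ convergent at $x_2$, and continue inductively. The diagonal sequence $g_k:=f_k^{(k)}$ is then a subsequence of $(f_n)$ converging at every point of $D$.

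Next I would show that $(g_k(x))_k$ is Cauchy for each $x\in X$, which by completeness of $\mathbb{R}$ yields pointwise convergence. Given $\epsilon>0$ and $x\in X$, pick $x_j\in D$ with $d(x,x_j)<\epsilon/3$. Since every $g_k\in F$ is $1$-Lipschitz, $|g_k(x)-g_k(x_j)|\leqslant \epsilon/3$ uniformly in $k$; and since $(g_k(x_j))_k$ converges, it is Cauchy, so there is $K$ with $|g_k(x_j)-g_l(x_j)|<\epsilon/3$ whenever $k,l\geqslant K$. The triangle inequality then gives $|g_k(x)-g_l(x)|<\epsilon$ for $k,l\geqslant K$, as required.

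There is no genuine obstacle here---this is the textbook Arzel\`a--Ascoli argument. The only point worth flagging is that ``bounded'' must be read as uniformly bounded, which is the intended meaning in the paper since the lemma is applied to bounded subsets of some $C(Y)$; the $1$-Lipschitz hypothesis is then used exactly once, to control the oscillation of $g_k$ across the gap between an arbitrary $x\in X$ and its dense approximant $x_j$.
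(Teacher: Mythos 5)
Your proposal is correct and follows essentially the same route as the paper: a diagonal extraction over a countable dense subset using boundedness, followed by the $1$-Lipschitz estimate to pass convergence from the dense set to an arbitrary point (the paper phrases this last step via a $\limsup$/$\liminf$ squeeze rather than your $\epsilon/3$ Cauchy argument, an immaterial difference). Your remark that ``bounded'' means uniformly (pointwise) bounded is the intended reading and matches the paper's use of the lemma.
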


\begin{proof} The proof is an easy diagonal argument.
\end{proof}



\begin{obs} \label{strong->NIP}
Assume that $M$ is a separable Banach structure and
$\phi(x,y)=\|x+y\|$.   If the type space $S_\phi(M)$ is strongly
separable, then $\phi$ has NIP on $M$.
\end{obs}

\begin{proof}
 Recall that $(S_\phi(M),\tau,\rho)$ is a
 compact topometric space where $(S_\phi(M),\tau)$ is compact and
 Polish, and  $(S_\phi(M),\rho)$ is a complete metric space. (Here, $\tau$ is the logic topology and $\rho$
 is the $\rho$-metric.)  Note that $\rho$ is separable since the type space is strongly separable. Also, the functions of the form
 $\phi(\cdot,a),a\in M$, are $1$-Lipschitz with respect to $\rho$.
 Now, use  Lemma~\ref{diagonal-lemma} and the equivalence
 (i)~$\Leftrightarrow$~(v) of
 Lemma~\ref{equivalence}.
\end{proof}

In \cite[Proposition~1]{Od}, it is shown that the type space of a
 separable stable Banach
spaces is separable with respect to the metric $\rho_s$
(cf.~Remark~\ref{uniform top}).
 For the unit ball of a separable and
stable space $M$, this is actually a consequence of the
separability of $M$ and the definability of types in stable
models (see \cite{Ben-Gro} and
\cite{K4}). 
 Edward Odell showed that the converse does not hold:

\begin{fct}[Odell, \cite{Od}] \label{Odell-fact}
Tsirelson's space is strongly separable. Furthermore, its type
space in unbounded continuous logic is uniformly separable (see
Remark~\ref{uniform top} above).
\end{fct}

 Let $M$ be a Banach space, $\phi(x,y)=\|x+y\|$, $X=S_{\tilde\phi}(M)$
the space of complete $\tilde\phi$-types on $M$. For $a\in M$,
define $\phi(a,y):X\to[0,2]$ by $q\mapsto \phi(a,q)$. (Notice that
$\phi(a,y):X\to[0,2]$, $a\in M$, is continuous.) We say that {\em
the space of definable predicates on $M$ is weakly sequentially
complete} if the set $F=\{\phi(a,y):X\to[0,2]~|a\in M\}$  has the
weak sequential completeness property  (cf.
Definition~\ref{RSC-WS-complete}(ii)).

The following observation is new, though it is really just a
corollary of Odell's deep result above.

\begin{cor} \label{Tsirelson->NIP} (i)  Tsirelson's  space has NIP.

\noindent
(ii) The space of definable predicates on Tsirelson's
space is not weakly sequentially complete.
\end{cor}

\begin{proof}
(i) Immediate, since the type space of Tsirelson's space is
strongly separable (see Fact~\ref{Odell-fact} above).
\newline
(ii) Note that, by the Eberlein--\v{S}mulian theorem
(Fact~\ref{ES theorem}), NIP and the weak sequential completeness
property is equivalent to stability. (See also Theorem~4.3 of
\cite{K3}.)
\end{proof}

It seems to be open whether the theory of Tsirelson's space has
NIP.

\begin{rmk} \label{Gowers example}
Note that one can not expect a converse to
Observation~\ref{strong->NIP}. There are some indications that the
{\em tree} Banach spaces can be counterexamples. With the previous
observations, we divide separable Banach spaces to some classes
such as stable, containing some $\ell_p$ ($1\leqslant p<\infty$),
with strongly separable type space, with NIP, not containing
$c_0$, and these classes form a hierarchy
$$\textrm{stable}\varsubsetneq\textrm{strongly separable}\subset_{?}\textrm{NIP}\subset_{?}\textrm{ not containing $c_0$}$$
Two questions arise. Must a space not containing $c_0$ have NIP?
Must a space with NIP have strongly separable types?  These
questions seem to be open, although we know that the answer to
one of these questions is certainly negative. Indeed, in the 90's
Gowers \cite{Gowers-94} has constructed a space $X_G$ not
containing $\ell_p$ and no reflexive subspace. By the result of
Haydon and Maurey \cite{HM}  which asserts that the spaces with
strongly separable types contain $\ell_1$ or have a reflexive
subspace, the type space of $X_G$ is not strongly separable and
this space does not contain $c_0$, so \{strongly separable\}
$\varsubsetneq$ \{not containing $c_0$\}. Also, we strongly
believe that the answers to both questions are negative.  We will
give sharper answers in a future work.
\end{rmk}

\section{$c_0$- and $\ell_p$-subspaces of perturbed $\aleph_0$-categorical
spaces} \label{explicit definable}  As we saw earlier the norm of
Tsirelson's space is given implicitly rather than explicitly. An
easy  observations  (see Corollary~\ref{strong-not-categorical}
below) shows that the norm of Tsirelson's space can not be
characterized by axioms in continuous logic. This and the fact
that Tsirelson's space does not contain $c_0$ or $\ell_p$ lead us
to a natural question as Gowers and Odell asked: Must an
``explicitly defined" Banach space contain $c_0$ or $\ell_p$?
(See \cite{Gowers, Gowers-weblog, Od-2}.) Now, we give an
explicit definition of a similar alternative of an ``explicitly
defined" Banach space, and give a positive answer to the above
question.

\begin{con} In this section the language is the usual language of Banach spaces (in continuous logic). Otherwise, we explicitly state  what is our desired language.
\end{con}


\subsection*{$c_0$- and $\ell_p$-types}
\begin{dfn} If $p\in[0,\infty)$ and $\epsilon>0$, the following set of statements
with countable variables $\x=(x_0,x_1,\ldots)$ will be called the
{\em $\epsilon-\ell_p$-type}: for every natural number $n$, and
scalars $r_0,\ldots,r_n$,
$$(1+\epsilon)^{-1}\Big\|\sum_0^n r_ix_i\Big\|\leqslant\Big\|\big(\sum_0^n|r_i|^p\big)^{\frac{1}{p}}x_0\Big\|\leqslant (1+\epsilon)\Big\|\sum_0^n r_ix_i\Big\|.$$

The following set of statements with countable variables
$\x=(x_0,x_1,\ldots)$ will be called the {\em
$\epsilon-c_0$-type}: for every natural number $n$, real number
$\epsilon>0$ and scalars $r_0,\ldots,r_n$,
$$(1+\epsilon)^{-1}\Big\|\sum_0^n r_ix_i\Big\|\leqslant\Big\|\big(\max_{0\leqslant i\leqslant n}|r_i|\big)x_0\Big\|\leqslant (1+\epsilon)\Big\|\sum_0^n r_ix_i\Big\|.$$
\end{dfn}

\begin{rmk} The notion of $\epsilon-\ell_p$-type ($\epsilon-c_0$-type)
is a formal definition, and is different from the notion of types
in model theory or Banach space theory. Recall that if $T$ is a
complete theory and there exists a model $M$ of $T$ such that the
$\epsilon-\ell_p$-type ($\epsilon-c_0$-type) is realized in $M$,
then it is a type in the sense of model theory, i.e. it belongs to
$S_{\x}(\emptyset)$. However, the following deep classical
theorem, due to Krivine, guarantees that for every Banach space
there exists some $p$ such that the notion of
$\epsilon-\ell_p$-type is really a type in the sense of model
theory.
\end{rmk}

\subsection*{Krivine's theorem}
We recall  the well-known theorem of Krivine.

\begin{dfn}
Let $p\in[1,\infty]$ and $(x_i)$ a sequence of elements of some
Banach space. We say that $\ell_p$ (resp. $c_0$) is block finitely
represented in $(x_i)$ if $p\in[1,\infty)$ (resp. $p=\infty$) and
for every $\epsilon>0$ and positive integer $n$, there are $n+1$
finite subsets $F_0,\ldots,F_n$ of the positive integers $N$ with
$\max F_j<\min F_{j+1}$, for all $0\leqslant j\leqslant n-1$ and
elements $y_0,\ldots,y_n$ with $y_j$ in the linear span of $\{x_i
: i\in F_j\}$ for all i so that for all scalars $r_1,\ldots,r_n$,
$$(1-\epsilon)\Big\|\big(\sum_0^n|r_i|^p\big)^{\frac{1}{p}}y_0\Big\|\leqslant\Big\|\sum_0^n r_iy_i\Big\|\leqslant(1+\epsilon)\Big\|\big(\sum_0^n|r_i|^p\big)^{\frac{1}{p}}y_0\Big\|$$
(where $(\sum_0^n|r_i|^p)^\frac{1}{p}=\sup|r_i|$, if $p=\infty$).
\end{dfn}

In the year in which Tsirelson's example appeared in print
\cite{Ts}, J.-L. Krivine \cite{Kr} publised his celebrated
theorem.

\begin{fct}[Krivine's theorem] Let $(x_n)$ be a sequence in a
Banach pace with infinite-dimensional linear span. Then either
there exists a $1\leqslant p<\infty$ so that $\ell_p$ is block
finitely represented in $(x_n)$ or $c_0$ is block finitely
represented in $(x_n)$.
\end{fct}

\subsection*{Main theorem}\label{perturb}
In this section, we prove the main theorem of the paper;  that is,
any (suitable) perturbed $\aleph_0$-categorical space contains
some classical sequences $c_0$ or $\ell_p$.

Let $M,N$ be two Banach spaces and $a\in M$ and $b\in N$. Recall
from Definition~\ref{almost isometric} that, for $\epsilon>0$, a
$(1+\epsilon)$-isomorphic between $(M,a)$ and $(N,b)$ is a linear
map $f:M\to N$ such that $f(a)=b$ and $\|f\|,\|f^{-1}\|\leqslant
1+\epsilon$. Let us fix a complete theory $T$ (in a countable
language of Banach spaces) and a monster model $\mathcal{U}$ of
it. In the following $S_n(T)$ denotes the set of $n$-types over
$\emptyset$.

\begin{dfn}[Banach--Mazur perurbation]  \label{Banach--Mazur perurbation}
The Banach--Mazur perturbation system $\mathfrak{p}_{_{BM}}$ (for
theory $T$) is the perturbations system defined by the
Banach--Mazur distance $d_{BM}$ in the sense of Definition~1.23
in \cite{Ben-perturb}, that is, for each $\epsilon>0$,
$\mathfrak{p}_{_{BM}}(\epsilon)=\{(p,q)\in S_n(T)^2:n<\omega,~
d_{BM}(p,q)<\epsilon\}$ where
$$d_{BM}(p,q)=\inf\left\{\log(1+\epsilon):
     \begin{aligned}[c]
      & \text{there is a $(1+\epsilon)$-isomorphism between $(\mathcal{U},a)$ } \\
      & \text{ and $(\mathcal{U},b)$, and $\mathcal{U}\vDash p(a),q(b)$}
    \end{aligned}\right\}$$
\end{dfn}

See also Iovino's paper  \cite{Iovino-stability-I}.

\begin{dfn}[$\mathfrak{p}_{_{BM}}$-isomorphism] \label{perturb isomorphism}
\begin{itemize}
             \item [(i)] Two separable models $M$, $N$ are called $\mathfrak{p}_{_{BM}}$-isomorphic if for each $\epsilon>0$ there
             is a bijective map $f:M\to N$ such that  $(\text{tp}^M(a),\text{tp}^N(f(a)))\in\mathfrak{p}_{_{BM}}(\epsilon)$ for all $a\in M^n$, $n<\omega$.
             \item [(ii)] A theory $T$ is called $\mathfrak{p}_{_{BM}}$-$\aleph_0$-categorical if every two separable models $M, N\vDash T$
             are $\mathfrak{p}_{_{BM}}$-isomorphic. A separable Banach space
             is called $\mathfrak{p}_{_{BM}}$-$\aleph_0$-categorical if
             its continuous first order theory (in the usual
             language) is  $\mathfrak{p}_{_{BM}}$-$\aleph_0$-categorical.
\end{itemize}
\end{dfn}

Clearly, every two models of a
$\mathfrak{p}_{_{BM}}$-$\aleph_0$-categorical theory are  almost
isometric  (cf. Definition~\ref{almost isometric} above).

We want to give a general result to any perturbation system, so
we generalizes the  above notions.  Let $\frak p$ be an arbitrary
perturbation system in the sense of
\cite[Definition~1.23]{Ben-perturb}. The notions $\frak
p$-isomorphism and $\frak p$-$\aleph_0$-categoricity are  defined
similar to Definition~\ref{perturb isomorphism}, by replacing
$\mathfrak{p}_{_{BM}}$ with $\mathfrak{p}$. Similarly, a separable
Banach space is called $\mathfrak{p}$-$\aleph_0$-categorical if
its continuous first order theory (in the usual language) is
$\mathfrak{p}$-$\aleph_0$-categorical.

\medskip
The following terminology is not standard, so we single it out.
Before that, we recall some notations. If $p(x)$ is any partial
type and $\epsilon>0$ then $p(x^\epsilon)$ denotes the partial
type $\exists x'(p(x')\wedge d(x,x')\leq\epsilon)$. If $\bar x$
is a countable tuple of variables then $p(\bar x^\epsilon)$ means
$p( x_0^\epsilon, x_1^\epsilon , \ldots)$, where the metric is
supermum metric on countable tuples.  For a perturbation system
$\frak p$, the ${\frak p}(\epsilon)$-neighbourhood around $p(\bar
x)$ is defined as follows: $p^{{\frak p}(\epsilon)}(\bar
x)=\{q(\bar x):(p(\bar x),q(\bar x))\in{\frak p}(\epsilon)\}$.
(Cf. Definition~1.1 of \cite{Ben-perturb}.)

\begin{dfn}[$\frak p$-saturation] \label{p-saturation} Let $\frak p$ be a perturbation
system. We will say that a structure $M$ is {\em
$\mathfrak{p}$-saturated} if for every type $p(\bar x)\in
S_\omega(\emptyset)$ (in  countable variable $\bar x$), and
$\epsilon>0$, the partial type $p^{{\frak p}(\epsilon)}(\bar
x^\epsilon)$ is realised in $M$.
\end{dfn}

Notice that the notion $\mathfrak{p}$-saturation above is weaker
than {\em $\mathfrak{p}$-approximately $\aleph_0$-saturation}  in
the sense of \cite[Definition~2.2]{Ben-perturb}. Indeed, recall
from Remark~2.4 of \cite{Ben-perturb} that the latter notion can
be defined for $\omega$-types in $S_\omega(\emptyset)$ (i.e, types
with $\omega$-tuples).

\begin{dfn}[Perturbed embedding] Let $\frak p$ be a perturbation
system,  $M$ a Banach space and $p(\bar x)$ the $0-\ell_p$-type
(or  $0-c_0$-type). We say that {\em  $M$
 embeds $\ell_p$ (or $c_0$)  $\frak p$-approximately} if for every
 $\epsilon>0$, the
partial type $p^{{\frak p}(\epsilon)}(\bar x^\epsilon)$ is
realised in $M$.
\end{dfn}

\medskip
Now we are ready to give the main result of the paper.

\begin{thm}[Main theorem] \label{perturbed main theorem} Let $\mathfrak{p}$ be perturbation system and $M$  $\mathfrak{p}$-$\aleph_0$-categorical
space. Then, $M$ embeds $c_0$ or $\ell_p$ ($1\leqslant p<\infty$)
$\frak p$-approximately.  Moreover, $M$ embeds  $\ell_2$ $\frak
p$-approximately. In particular, if $\frak p$ is the
Banach--Mazur perturbation system $\mathfrak{p}_{_{BM}}$, then
$M$ embeds  $\ell_2$ almost isometrically.

\end{thm}
\begin{proof} Let $M$ be a separable model of $T$. By Krivine's
theorem, some $\ell_p$ (or $c_0$) is block finitely representable
in $M$. So, for every $\epsilon>0$, the $\epsilon-\ell_p$-type is
a type in $S_\omega(T)$(=$S_\omega(\emptyset)$). (Note that in
this case the $\epsilon-\ell_p$-type is a partial type, but we
can expand it to a complete type in  $S_\omega(T)$.)
 As $\epsilon$ is
arbitrary, the $0-\ell_p$-type is a complete type. Let $p(\bar x)$
be this complete type.
By 
 Lemma~3.4 of  \cite{Ben-perturb},
 $M$ is  $\frak p$-saturated. (Indeed, recall that the notion $\mathfrak{p}$-approximately
 $\aleph_0$-saturation in  \cite[Definition~2.2]{Ben-perturb} is stronger than $\frak p$-saturation.)
By the $\frak p$-saturation, for every $\epsilon>0$, $p^{{\frak
p}(\epsilon)}(\bar x^\epsilon)$ is realised in $M$, and so $M$
 embeds $\ell_p$  $\frak p$-approximately.
Moreover, it is known that $\ell_2$ is finitely representable in
$c_0$ and $\ell_p$. (Indeed, in the case of $c_0$, every Banach
space is finitely representable in $c_0$ (see
\cite[Example~12.1.2]{AK}). If $\ell_p$ is finitely
representable, then so is $L_p$ (\cite[Proposition~12.1.8]{AK}),
and since $\ell_2$ is isometric to a subspace of $L_p$
(\cite[Theorem 6.4.12]{AK}), $\ell_2$ is finitely
representable.)  So $\epsilon-\ell_2$-type
  is a partial type in the sense of model theory, and so $M$ embeds $\ell_2$ $\frak p$-approximately.

 In particular, if $\frak p$ is the Banach--Mazur perturbation
system $\mathfrak{p}_{_{BM}}$, $M$ embeds $\ell_p$ (or $c_0$)
${\frak p}_{_{BM}}$-approximately.  That is, for each $\epsilon>0$
there is a sequence $\bar a\in M$ such that it realise the
partial type $p^{{\frak p}_{_{BM}}(\epsilon)}(\bar x^\epsilon)$.
This means that there is a sequence $\bar b$ in an elementary
extension of $M$ such that $d(\bar a,\bar b)\leq\epsilon$ and
$\models p^{{\frak p}_{_{BM}}(\epsilon)}(\bar b)$. So, by the
definition, $d_{BM}(tp(\bar b),\ell_p)\leq\epsilon$.  As
$\epsilon$ is arbitrary, there is a sequence $\bar a\in M$ (for
$\epsilon>0$) such that $d_{BM}(tp(\bar a),\ell_p)\leq\epsilon$.
This means that there is a sequence in $M$ which is
  $(1+\epsilon)$-isomorphic to $\ell_p$ (or $c_0$).
Again, as $\ell_2$ is finitely representable in $c_0$ and
$\ell_p$, $M$ embeds  $\ell_2$ almost isometrically.
\end{proof}

\begin{rmk} \label{stricter}
Recall that for two perturbation systems
$\mathfrak{p},\mathfrak{p}'$, the perturbation system
$\mathfrak{p}$ is stricter than $\mathfrak{p}'$, denoted by
$\mathfrak{p}<\mathfrak{p}'$, if for each $\epsilon>0$ there is
$\delta>0$ such that
$\mathfrak{p}(\delta)\subset\mathfrak{p}'(\epsilon)$. Clearly, if
$\mathfrak{p}<\mathfrak{p}'$ and $N, N$ are
$\mathfrak{p}$-isomorphic then they are
$\mathfrak{p}'$-isomorphic too. Therefore, if $\mathfrak{p}$ be
any perturbation system stricter than $\mathfrak{p}_{_{BM}}$, then
the ``almost isometrically" version of the above theorem holds for
$\mathfrak{p}$.
\end{rmk}

\begin{exa} \label{perturbed example} (i) Let $T=Th(\mathcal{AN}_{\subseteq[1,r]})$ be the theory of Nakano spaces presented
in \cite{Ben-Nakano}. The theory $T$ is not
$\aleph_0$-categorical, but it is $\lambda$-stable for
$\lambda\geqslant \mathfrak{c}$ (see Theorem~3.10.9 in
\cite{Poitevin}). Ben~Yaacov proved that $T$ is
$\aleph_0$-categorical and $\aleph_0$-stable up to small
perturbations of the exponent function. This perturbation system
is stricter than $\mathfrak{p}_{_{BM}}$ in the language of Banach
spaces (see Proposition~4.6 in \cite{Ben-Nakano}), and so the
theory of Nakano spaces is
$\mathfrak{p}_{_{BM}}$-$\aleph_0$-categorical.

\noindent
 (ii) (non-example) The theory of the 2-convexification
of Tsirelson space, is denoted by $M_{\mathcal{T},2}$,  as
presented by B. Johnson (in the language of Banach spaces) is not
$\mathfrak{p}_{_{BM}}$-$\aleph_0$-categorical because this space
does not contain  any $\ell_p$ or $c_0$. It is known  that every
ultrapower of this space is linearly homeomorphic to a canonical
direct sum of the space and a Hilbert space of suitable dimension.
Indeed, see Proposition~2.4.a  of \cite{JLS} for the proof; of
course  this depends on the argument of Pisier that is pointed to
in \cite{JLS} in the paragraph just before Proposition~2.4.a (from
the book \cite{Pisier}).
 Ward Henson suggested  the following argument that there are separable models $N$ of the theory of  $M_{\mathcal{T},2}$ such that $N$ and
$M_{\mathcal{T},2}\oplus\ell_2$ are isomorphic. Indeed, for an
ultrafilter ${\mathcal U}$, let $H$ be a subspace of
$(M_{\mathcal{T},2})_{\mathcal U}$ isomorphic to a Hilbert space
for which $(M_{\mathcal{T},2})_{\mathcal U}$ is the direct sum
internally of $M_{\mathcal{T},2}$ and $H$ (which is possible by
the result pointed above).
Let $H'$ be any separable, infinite dimensional subspace of $H$
and let $N$ be a separable infinite dimensional elementary
subspace of $(M_{\mathcal{T},2})_{\mathcal U}$ that contains
$M_{\mathcal{T},2}$ and $H'$. Letting $H_0 = Y \cap H$ it is easy
to show that $N = M_{\mathcal{T},2} \oplus H_0$ (again, the sum
$\oplus$ is the one internal to $(M_{\mathcal{T},2})_{\mathcal
U}$).  Of course this says essentially nothing about the precise
way in which the norm on $N= M_{\mathcal{T},2} \oplus H_0$ is
defined -- and it seems likely that there will be a large number
of different (non-isometric) possibilities.
 To summarize, $M_{\mathcal{T},2}$ and
$M_{\mathcal{T},2}\oplus  H_0$ are two non
$\mathfrak{p}_{_{BM}}$-isomorphic separable models of this theory.
\end{exa}

\begin{dfn} \label{2definition aleph_0}
A separable Banach space is called $\aleph_0$-categorical if it
is $\mathfrak{p}_{{\text{id}}}$-$\aleph_0$-categorical, where
$\mathfrak{p}_{{\text{id}}}$ is the strictest perturbation
system. (See also Definition~\ref{definition aleph_0} below.)
\end{dfn}

\begin{cor} \label{main theorem}  Every $\aleph_0$-categorical Banach space $M$ contains
isometric copies of $c_0$ or $\ell_p$ ($1\leqslant p<\infty$).
Moreover, $M$ contains an isometric copy of $\ell_2$.
\end{cor}

\begin{proof}
This is a consequence of \cite[Lemma~2.5]{Ben-perturb} and the
main theorem (Theorem~\ref{perturbed main theorem}) assuming the
identity perturbation system $\mathfrak{p}_{\text{id}}$.  Indeed,
by Theorem~\ref{perturbed main theorem}, for every $\epsilon>0$,
$p^{{\frak p}_{\text id}(\epsilon)}(\bar x^\epsilon)$ is realised
in $M$, where $p(\bar x)$ is the $0-\ell_p$-type (or
$0-c_0$-type). By Lemma~2.5 of \cite{Ben-perturb}, for every
$\epsilon>0$, $p^{{\frak p}_{\text id}(\epsilon)}(\bar x)$ is
realised in $M$. As ${\frak p}_{\text id}$ is the identity
system, i.e. $p^{{\frak p}_{\text id}(\epsilon)}=\{p\}$ for all
$\epsilon>0$, there is a sequence in $M$ which is isometric to
$\ell_p$ (or $c_0$). Similarly, $M$ contains an isometric copy of
$\ell_2$.
\end{proof}

\begin{rmk} (i) The spaces $L_p[0,1]$ ($1\leq p<\infty$) are
$\aleph_0$-categorical (see \cite{BBHU}). The Gurarij space (a
universal, ultrahomogeneous Banach space) is also
$\aleph_0$-categorical (see \cite{BH}).

\noindent
 (ii) Note that since the nature of our question, i.e.
the existence of good subspaces, is local and
$\aleph_0$-categoricity is a global condition on norm, one can
not expect a converse to Theorem~\ref{perturbed main theorem}.
For example, the space $M_{\mathcal{T}}\oplus\ell_p$ has a
complex norm (since it contains the Tsirelson space
$M_{\mathcal{T}}$), although it contains a good subspace, i.e.
$\ell_p$.

\noindent
 (iii) As we mentioned earlier, Corollary~\ref{main
theorem} is folklore to experts in the field. Ward Henson
informed us that one can prove that every $\aleph_0$-categorical
space contains $\ell_2$ using Dvoretzky's Theorem.
\end{rmk}

\begin{rmk} Note that sequence  spaces  $\ell_p$ ($p\neq 2$) and $c_0$
are not  $\aleph_0$-categorical. Despite this, $\ell_p$ ($p\neq
2$) is {\em almost} categorical, i.e. its theory has exactly two
separable models $\ell_p$ and $L_p[0,1]\oplus_p\ell_p$. This fact
is folklore to experts in field, including Alexander Berenstein,
Ward Henson and  Jose Iovino. We thank C. Ward Henson for
communicating to us the following argument and references.  The
main point behind this fact is the classification of $L_p(\mu)$
spaces under elementary equivalence in Theorem~2.2 of \cite{Hen}.
The proof has four steps. First, it was shown in
\cite[Corollary~2.5]{HenM1} that for a Banach space $M$ with
nonstandard hull $\widehat{M}$:

\medskip
$ \ \ \ \ \ \ \ \ \ \ \ \ \ \ \ \ \ \ M$ is an $L_p$-space if and
only if $\widehat{M}$ is an $L_p$-space.

\medskip\noindent
Therefore, if two Banach spaces $\ell_p$ and $M$ have isometric
nonstandard hulls, then $M$ is an $L_p$-space. (Recall that, by
Kakutani representation theorem,   every $L_p$-space $M$ is of
the form $L_p(\mu)$ for some measure $\mu$.)
 Second, by  the
Keisler-Shelah Theorem for metric structures
(\cite[Theorem~1.13]{Hen}), two metric structures are elementarily
equivalent iff they have linearly isometric ultrapowers iff they
have linearly isometric nonstandard hulls.  (We believe this
theorem should be credited to C. Ward Henson \cite{Hen} and to
Jacques Stern  \cite{Stern} equally and independently.) Using
this theorem, if $\ell_p$ and $M$ are elementary equivalent, then
$M$ is of the form $L_p(\mu)$ for some measure $\mu$. Third, by
Theorem~2.2 of \cite{Hen},  if $\ell_p$ and $L_p(\mu)$ are
elementary equivalent, then the measure space of $\mu$ has
infinitely many atoms. Fourth, if $M = L_p(\mu)$ is separable and
the measure space of $\mu$ has infinitely many atoms, then either
(1) the atomless part of the measure space is $\{0\}$ (so $M$ is
linearly isometric to $\ell_p$),  or (2) the atomless part of the
measure space is isomorphic to the measure space of $[0,1]$ with
Lebesgue measure (so $M$ is linearly isometric to $\ell_p \oplus_p
L_p[0,1]$). (The latter statement is a well-known fact due to
Carath\'{e}odory \cite[Section~14, Theorem~5]{Lacey}.) So putting
everything together we see that the complete theory of $\ell_p$
($p\neq 2$) has exactly two separable models $\ell_p$ and
$L_p[0,1]\oplus_p\ell_p$ up to isometry.  This fact suggests the
following conjecture: {\em Do all models of a theory with
finitely many separable models contain some $\ell_p$ or $c_0$?}
\end{rmk}

Still, one can say more:

\begin{rmk} \label{finitely representable} (i) Let $M$ be an  $\aleph_0$-categorical  Banach
space. Then $M$ embeds (isometrically) every Banach space with a
basis which is finitely representable in $M$. For this, repeat the
argument of the proof of Theorem~\ref{perturbed main theorem}.

\noindent
 (ii)  The {\em spectrum} $\Sigma(X)$ of a Banach space
$X$ consist of all $p\in[1,\infty]$ for which $X$ contains
$\ell_p^n$'s uniformly, that is, there exists $K>0$ such that for
all $n$, there is a subspace of $X$  which is $K$-isomorphic to
$\ell_p^n$. (See \cite[p. 57]{Schw}) 
By Krivine's theorem, the spectrum $\Sigma(X)$
 consist of all $p\in[1,\infty]$ for which
$\ell_p$ is finitely representable in $X$ (see \cite{Guer},
Theorem~II.5.13). Dvoretzky's theorem says that $2\in\Sigma(X)$.
It is known that $\Sigma(X)$ is a closed subset of $[1,\infty]$,
and $\Sigma(X)\cap[1,2]$ is always an interval; for example,
$\Sigma(L_p)=[p,2]$ if $1\leqslant p\leqslant 2$,
$\Sigma(L_p)=\{2,p\}$ if $2\leqslant p<\infty$, and
$\Sigma(L_\infty)=[1,\infty]$. Therefore:
 Every $\aleph_0$-categorical  Banach space $M$ contains
isometric copies of  $\ell_p$ for all $p\in \Sigma(M)$.
\end{rmk}

\subsection*{Concluding remarks}

(1) As previously mentioned, Krivine and Maurey \cite{KM} proved
that every {\em stable} Banach space contains a copy of $\ell_p$
for some $1\leqslant p<\infty$. Also, it was noticed that
 the type space of every separable stable Banach
space is {\em strongly separable}. Later, Haydon and Maurey
\cite{HM} showed that a Banach space with strongly separable
types contains either a reflexive subspace or a subspace
isomorphic to $\ell_1$.




\medskip\noindent
 (2)  In \cite{Cai},  Caicedo showed that for  any suitable extension $L$ of continuous logic with the compactness and the
separable downward L\"{o}wenheim-Skolem theoems, each sentence of
$L$ is equivalent to a sentence of continuous logic. On the other
hand, using these two theorems (in continuous logic), one can
give a proof of Corollary~\ref{main theorem}.  Indeed, suppose
that $M$ is $\aleph_0$-categorical. By Krivine's theorem, some
$\ell_p$  (or $c_0$) is  finitely representable in $M$, and so,
using the compactness theorem, there is an elementary extension
$N$ of $M$ such that it contains $\ell_p$ (or $c_0$). By the
separable downward L\"{o}wenheim-Skolem theorem, there is a {\em
separable}  elementary substructure $M'$ of $N$ such that it
contains $\ell_p$ (or $c_0$). By the $\aleph_0$-categoricity, $M$
and $M'$ are isometric, and so $M$ contains some classical
sequence space. As $\ell_2$ is finitely representable in $\ell_p$
and $c_0$, it is easy to see that $M$ contains $\ell_2$.
 Therefore, by Caicedo's result,
the above argument does not work in a logic {\em stronger} than
continuous logic.

 \medskip\noindent
 (3) There is still another stronger property: a separable Banach space $X$ is said to be {\em finitely determined}  if for
each separable space $Y$ such that $X$ is finitely representable
(f.r.) in $Y$ and $Y$ is f.r. in $X$ then $Y$ is isometric to
$X$. Clearly, every finitely determined space is
$\aleph_0$-categorical, but not converse. In \cite{K5}, a more
direct proof of existence of classical sequence spaces in finitely
determined spaces is given.

 \medskip\noindent
 (4) One might expect that $\aleph_0$-categoricity implies that
 a large number of $\ell_p$, $p\in[1,\infty)$, or $c_0$, are involved in the
 space. This is not true; for example, the theory $\ell_2\cong L_2=L_2([0,1],\lambda)$
is $\aleph_0$-categorical (see \cite{BBHU}), but $\ell_2$
 does not have any subspace isomorphic to $c_0$ or $\ell_q$ for all
 $q\neq 2$  (see \cite{AK}, Corollary~2.1.6).
 In other words, a direct proof of Theorem~\ref{perturbed main theorem}, without using Krivine's Theorem,
 does not imply a stronger result.

 \medskip\noindent
 (5) As previously mentioned,  Theorem~1.1  of \cite{Iov-2005} asserts that
the existence of enough definable types guarantees the existence
of $\ell_p$ subspaces.
Note that our approach in this paper is different. In fact we
counted separable models and the main result
(Theorem~\ref{perturbed main theorem}) is about complete theories.

 \medskip\noindent
 (6) Although $\aleph_0$-categoricity and
$\aleph_0$-stability do not have any connection in general, but to
our knowledge the most examples of $\aleph_0$-stable theories are
studied in continuous model theory are $\aleph_0$-categorical
(see \cite{BBHU}).

\medskip\noindent
 (7) (A Krivine-Maurey type theorem).
We believe that the following statement holds: For any separable
NIP space $X$ there exists a spreading model of $X$ containing
$c_0$ or $\ell_p$ for some $1\leqslant p<\infty$. The proof used
Borel definability of types and Krivine's Theorem. This result
provides answers to the questions in Remark~\ref{Gowers example}.
Full proof will be presented elsewhere.

\section{Dividing lines in Banach spaces and model theory} \label{correspondence}
This section aims to provide a classification of Banach spaces
similar to Shelah's classification in classical logic.  For the
sake of completeness, we recall some facts which were observed in
\cite{K4}, and then  we  use them to give some examples which are
(in our view) very illuminating.


\subsection*{Banach space for a formula} Let $\M$ be an
$L$-structure, $\phi(x,y):M\times M\to \Bbb R$ a formula (we
identify formulas with real-valued functions defined on models).

Let $S_\phi(M)$ be the space of complete $\phi$-types over $M$ and
set $A=\{\phi(x,a),-\phi(x,a)\in C(S_\phi(M)): a\in M\}$. The
(closed) convex hull of $A$, denoted by ($\overline{conv}(A)$)
$conv(A)$, is the intersection of all (closed) convex sets that
contain $A$. $\overline{conv}(A)$ is convex and closed, and
$\|f\|\leqslant \|\phi\|$ for all $f\in\overline{conv}(A)$. So,
by normalizing we can assume that $\|f\|\leqslant 1$ for all
$f\in\overline{conv}(A)$. Set $B=\overline{conv}(A)$ and
$V=\bigcup_{\lambda>0}\lambda B$. It is easy to verify that $V$
is a Banach space with the normalized norm and $B$ is its unit
ball. This space will be called the {\em space of linear
$\phi$-definable relations  on $M$}.

\begin{fct}[\cite{K4}] 
Assume that $\phi(x,y)$, $\M$, $B$ and $V$ are as above. Then the
following are equivalent:
\begin{itemize}
             \item [(i)]  $\phi$ is stable on $\M$.

             \item [(ii)]  $B$  is weakly compact.

             \item [(iii)]   The Banach space $V$ is reflexive.
\end{itemize}
\end{fct}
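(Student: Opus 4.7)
The plan is to chain three classical tools from functional analysis: Grothendieck's double-limit criterion (already recorded in Definition~\ref{stable-formula}), the Krein--\v Smulian theorem on closed convex hulls of weakly compact sets, and Kakutani's characterization of reflexivity via weak compactness of the unit ball.

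For (i)$\Leftrightarrow$(ii), Grothendieck's criterion gives that $\phi$ is stable on $\M$ exactly when $F=\{\phi(x,a):a\in M\}\subseteq C(S_\phi(M))$ is relatively weakly compact. Since $A=F\cup(-F)$, relative weak compactness of $F$ and of $A$ coincide. The Krein--\v Smulian theorem then yields that $B=\overline{\operatorname{conv}}(A)$ is weakly compact in $C(S_\phi(M))$ iff $A$ is relatively weakly compact, chaining to give (i)$\Leftrightarrow$(ii). For (ii)$\Leftrightarrow$(iii), observe that by construction $V=\bigcup_{\lambda>0}\lambda B$ is a Banach space under the Minkowski gauge of $B$, and $B$ is its closed unit ball; Kakutani's theorem then identifies reflexivity of $V$ with weak compactness of $B$ in the weak topology of $V$. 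The continuous inclusion $V\hookrightarrow C(S_\phi(M))$ is weakly continuous, so weak compactness of $B$ in $V$ automatically transfers to weak compactness in $C(S_\phi(M))$.

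The main obstacle is the reverse passage: upgrading weak compactness of $B$ in $C(S_\phi(M))$ to weak compactness in the intrinsic (a~priori finer) weak topology of $V$. I would handle this by combining the Riesz representation theorem (realising $C(S_\phi(M))^*$ as measures on $S_\phi(M)$) with a Hahn--Banach extension argument showing that every continuous linear functional on $V$ is represented, on $B$, by a measure on $S_\phi(M)$. Consequently the two weak topologies agree on $B$, and Kakutani closes the equivalence.
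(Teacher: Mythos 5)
Your first equivalence is fine as far as it goes: Grothendieck's double-limit criterion (already built into Definition~\ref{stable-formula}) identifies stability of $\phi$ with relative weak compactness of $A$ in $C(S_\phi(M))$, and Krein's theorem then makes $B=\overline{\operatorname{conv}}(A)$ weakly compact \emph{in the ambient space} $C(S_\phi(M))$; likewise (iii)$\Rightarrow$(ii) via Kakutani and the norm-continuous inclusion $V\hookrightarrow C(S_\phi(M))$ is unproblematic. (Note the paper itself offers no proof of this Fact -- it is quoted from \cite{K4} -- so the comparison here is only with your argument.)

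The genuine gap is exactly the step you flagged and then claimed to repair: passing from weak compactness of $B$ inside $C(S_\phi(M))$ to weak compactness of $B$ for $\sigma(V,V^*)$. Your proposed fix -- Hahn--Banach plus Riesz representation to show every $\Lambda\in V^*$ is represented on $B$ by a measure on $S_\phi(M)$ -- is false in general: an element of $V^*$ is bounded only for the gauge norm $\||\cdot\||$, which is (much) finer than $\|\cdot\|_\infty$ on $V$, so Hahn--Banach gives no extension to $C(S_\phi(M))$, and the two weak topologies need not agree on $B$. Concretely, in $Y=c_0$ take $W=\overline{\operatorname{absconv}}\{e_n/n\}=\{y\in c_0:\sum_n n|y_n|\leqslant 1\}$: this set is norm-compact, hence weakly compact in $Y$, yet its gauge space is isometric to a weighted $\ell^1$, which is not reflexive; moreover the functional $y\mapsto\sum_n n\,y_n$ is norm-one on that gauge space but agrees on $W$ with no element of $Y^*=\ell^1$ (it would need coefficients $\mu_n=n$), and indeed it is not weakly continuous on $W$ since $e_n/n\to 0$ in $Y$ while the functional equals $1$ at each $e_n/n$. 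So ``unit ball weakly compact in the ambient $C(X)$-space'' simply does not imply reflexivity of the gauge space: any correct proof of (i)/(ii)$\Rightarrow$(iii) must exploit the specific form of $A$ (all functions $\pm\phi(x,a)$, $a\in M$) and the double-limit property of the pairing between $M$ and $S_\phi(M)$ -- e.g.\ through a James-type sup-attainment argument, an iterated-limit computation carried out for $V$ and its dual ball, or a Davis--Figiel--Johnson--Pe\l czy\'nski-style factorization -- rather than abstract weak compactness of $B$ alone. As written, your argument proves only ``$\phi$ stable $\Leftrightarrow$ $B$ weakly compact in $C(S_\phi(M))$'' and ``$V$ reflexive $\Rightarrow$ $B$ weakly compact'', leaving the substantive implication toward reflexivity unproved.
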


Recall that for an infinite compact Hausdorff space $X$, the
space $C(X)$ is not reflexive.

\begin{fct}[\cite{K4}]
Assume that $\phi(x,y)$, $\M$, and $V$ are as above. Then the
following are equivalent:
\begin{itemize}
             \item [(i)] $\phi$ has NIP on $\M$.

             \item [(ii)] $V$ is Rosenthal Banach space.

             \item [(iii)]  $V$ does not contain an isomorphic copy of $\ell_1$.
\end{itemize}
\end{fct}

\begin{dfn} \label{SOP} Let ${\M}$ be a  model (of theory $T$) and
$\phi(x,y)$ a formula. We say $\phi(x,y)$  has the {\em strict
order property on $\M$} (short SOP on $\M$) if there exists a
sequence $(a_ib_i:i<\omega)$ in  $\M$ and $\epsilon>0$ such that
for all $i<j$,
$$\phi({\M},a_i)\leqslant\phi({\M},a_{i+1}) \ \ \ \mbox{ and } \ \ \phi(b_j,a_i)+\epsilon<\phi(b_i,a_j).$$
The acronym SOP stands for the strict order property and NSOP is
its negation. A complete theory $T$ has  SOP if there are a
formula $\phi$ and a model $\M$ of it such that $\phi$ has SOP on
$\M$, and otherwise it is said that $T$ has NSOP.
\end{dfn}

\begin{rmk} (i)   Let ${\mathcal U}$ be a monster model (of theory
$T$), $\phi(x,y)$ a formula and $\M$ a small model. Suppose that
$\phi$ has SOP on  $\M$. Then  it is easy to verify that $\phi$
has SOP on the moster model $\mathcal U$. (Indeed, suppose that
$p$ is a $\phi$-type over $\M$, $(b_j)\in\M$ and $tp(b_j/{\M})\to
p$. By SOP on $\M$, $\phi(b_j,a_i)\leqslant\phi(b_j,a_{i+1})$,
and since $\phi(x,a_i)$'s are continuous, so
$\phi(b,a_i)\leqslant\phi(b,a_{i+1})$ for all $b\models p$. This
means that $\phi(b,a_i)\leqslant\phi(b,a_{i+1})$ for all
$b\in\mathcal U$.)

\noindent (ii) SOP strictly implies instability. In classical
($\{0,1\}$-valued) logic,  this is a known fact.

\noindent (iii) Suppose that $\M=\mathcal U$ and $V$ is as above.
If $V$ is weakly sequentially complete, i.e. every weak Cauchy
sequence has a weak limit (in $V$), then  $\phi$ has NSOP. This is
a consequence of the Eberlein--\v{S}mulian theorem (Fact~\ref{ES
theorem}). (Cf. \cite[Remark~3.4(ii)]{K-Baire}.)

\noindent (iv) Recently in \cite{K-Baire} it is shown that the
converse of (iii) above does not hold (for classical logic). That
is, if $\phi$ has NSOP, we cannot conclude that $V$ is weakly
sequentially complete (cf. \cite[Example~3.5(ii)]{K-Baire}). On
the other hand, we strongly believe that there is {\em no} 
perfect analog of Shelah's theorem (i.e. a complete theory is
stable if and only if it has NIP and NSOP) in {\em continuous}
logic.   This is discussed in detail in \cite{K-classification}.
 For the sake of
completeness we give the diagram of these observations for {\em
classical} logic \cite{K-Baire}:

\bigskip

\bigskip

\bigskip

~~~~~~~~~~~~~~~~~~~~~~~~~~~~ {\scriptsize Shelah}

~~ Stable ~~~~~~~~~~~~~~~~~ $\Longleftrightarrow$ ~~~~~~~~~~~~~
$NIP$ ~~~~~~~ $\&$ ~~~~~ $NSOP$

\bigskip

~~~~~ $\Updownarrow$ {\scriptsize Eberlein--Grothendieck}
~~~~~~~~~~~~~~~~~~~~~ $\Downarrow$
 {\scriptsize
 \ \ \ \ \ \ \ \ \ \ \ }
  ~~~~~~~~~~~~~~ $\Uparrow $

\medskip

~~~~~~~~~~~~~~~~~~~~~~~~ {\scriptsize Eberlein--\v{S}mulian}

 Reflexive ~~~~~~~~~~~~~~~~~ $\Longleftrightarrow$
~~~~~~~~~ Rosenthal ~~~~ $\&$ ~~~~~ $WS$-complete

\bigskip

\medskip
\end{rmk}


\subsection*{Dividing lines in classical spaces}  \label{examples}
In this subsection, we list some classical Banach spaces and some
theories and point out their model theoretic dividing lines. Many
of the following examples have been studied previously (mostly by
Henson and co-authors in the 70's), and much of what is said
about them is well-known (see \cite{HI} and references therein).
Nevertheless, there is a definite merit to gathering all of these
facts together in one place.

\medskip
It should be noted that since the notions NIP in a model (in both
classic and continuous logics) and SOP for continuous logic are
new, the following observations on these notions do not appear
somewhere in literature. In fact, in our view, the examples form
the most interesting and illuminating part of the article.

\begin{exa}  \label{exam1}
\begin{itemize}
             \item [(i)]  $L_p$ Banach lattices ($1\leqslant p<\infty$) are
             reflexive and so Rosenthal and weakly sequentially
             complete. By Kakutani representation theorem of $L$-spaces (\cite[4.27]{AB}),
             it was proved that the classes of $L_p$ Banach lattices are
             axiomatizable (in a suitable language) and their theories, denoted
             by $ALpL$, are stable (see \cite{BBHU}, Section~17).

             \item [(ii)] $L_1$ Banach lattices are neither reflexive nor Rosenthal in
             general (e.g. $\ell_1$). Although, they are weakly sequentially complete.
             Nevertheless, their theory, $AL1L$, is stable (see
             (i) above). However, it is not established in some extensions of the language. For
             example, Alexander Berenstein showed that $L_1({\Bbb R})$ with
             convolution is unstable (see \cite{Be}), later did it appear that
             $\ell_1(\mathbb{Z},+)$ with convolution is
             unstable (see \cite{FHS1}, Proposition~6.2). In fact it has SOP (see Example~\ref{exam2} below). On the other hand, since the theory atomless
             probability measure algebra, denoted by $APA$, is interpretable in the
             theory $L_1$ Banach lattices, the prior also is stable (\cite{BBHU}).

             \item [(iii)]  Hilbert spaces are reflexive. The theory infinite dimensional Hilbert spaces,
             as described in \cite{BBHU}, is stable.

             \item [(iv)] \label{c0}   $c_0$ is neither reflexive nor weakly
             sequentially complete, but it is Rosenthal. Let $(e_n)_1^\infty$ be
             standard basis for $c_0$ and $s_n=e_1+\cdots+e_n$.
             $(s_n)_1^\infty$ is called summing basis for $c_0$.
             Then $\|e_m+s_n\|=2$ if $m\leqslant n$ and $=1$ if
             $m>n$. So, the formula $\phi(x,y)=\|x+y\|$ has order
             property. Now let $x=(a_n)_1^\infty\in\ell_1=(c_0)^*$
             then $x^*(s_n)\to \sum_1^\infty a_k$. So $(s_n)_1^\infty$ is a weak Cauchy sequence with no weak
             limit. It is trivial that $c_0$ is Rosenthal, because $(c_0)^*=\ell_1$ is separable but $(\ell_1)^*=\ell_\infty$ is nonseparable. Let $\psi(x,s_n)=\max(\|x+s_n\|,\|x-s_n\|)$.
             Then $\psi(x,s_m)\leqslant\psi(x,s_n)$ and
             $\psi(e_n,s_m)+1\leqslant\psi(e_m,s_n)$ for all $m<n$. So
             $\psi(x,y)$ has SOP on $c_0$.

             \medskip
             By Example~\ref{c_0,1} above, $\phi(x,y)$ has IP on $c_0$. Therefore, if $c_0$ is a model of a theory $T$, then $T$ has
             IP. (Also, see Example~\ref{c_0} for more information.)

             \item [(v)] $\ell_\infty$ is neither stable nor
             Rosenthal or weakly sequentially complete because $c_0$ and $\ell_1$ live
             inside $\ell_\infty=C(\beta\Bbb N)$ (see below). The formula $\psi$ as described above has SOP
             in $\ell_\infty$.  This fact explains why the case $p=\infty$ is excluded from (i) above.
             Since $c_0$ has IP, $\ell_\infty$ has IP.

             \item [(vi)] $C(X,{\Bbb R})$ space for an (infinite) compact
             Hausdorff space is neither reflexive nor
             Rosenthal nor weakly sequentially complete (see (vii) below). By
             Kakutani representation theorem of $M$-spaces, the
             class of $C(X)$-spaces is axiomatizable in the language of Banach lattices. Indeed,
             replace the axioms of abstract $L_P$-spaces in \cite{BBHU} by the $M$-property
             $\|x^++y^+\|=\max(\|x^+\|,\|y^+\|)$.
             Now we add a constant symbol $\textbf{1}$ to our language
             as an order unite, i.e. for all $x$, $\|x\|=1$ implies that $x^+\leqslant
             \textbf{1}$; equivalently,
             $\sup_x((\|x\|-1)\dotminus\|x^+\vee{\bf 1}-{\bf
             1}\|)$. Note that $\|\cdot\|$ is an order unite norm
             if for all $x$ and $\alpha$, $x^+\leqslant \alpha{\bf
             1}$ implies that $\|x\|\leqslant\alpha$;
             equivalently $\sup_x(\|x^+\vee\alpha{\bf 1}-{\bf
             1}\|\dotminus(\|x\|-\alpha))$. Since $\ell_\infty$
             has SOP, the theory of $C(X)$-spaces has SOP.
             Since $\ell_{\infty}$ has IP, then the theory of $C(X)$-spaces has the independence property.

             \item [(vii)]  $C_0(X,{\Bbb C})$ for an (infinite) locally compact space $X$  is neither reflexive nor
             Rosenthal nor weakly sequentially complete. Note that, by Gelfand-Naimark
             representation theorem of Abelian $C^*$-algebras, the class
             $C_0(X,{\Bbb C})$-spaces is axiomatizable in a suitable language (see \cite{FHS1} for the noncommutative theory).
             First, it is known that  all Banach spaces, in particular $\ell_1$ and
             $c_0$, live inside $C(K)$-spaces where $K$'s are compact. So
             $C(K)$ is neither reflexive nor
             Rosenthal or weakly sequentially complete in general.
             Second, by an example of \cite{FHS1}, we will show that the theory $C_0(X)$-spaces has SOP and IP. Indeed, in $X$ find a sequence of distinct $x_n$ that
             converges to some $x$ (with $x$ possibly in the
             compactification of $X$). For each $n$ find a positive $a_n\in
             C_0(X)$ such that $\|a_n\|=1$ and $a_n(x_i)=1$ if
             $i\leqslant n$ and $=0$ if $i>n$. By replacing $a_n$
             with maximum of $a_j$ for $j\leqslant n$, we may
             assume that this sequence is increasing. Also, we can
             assume that the support $K_n$ of each $a_n$ is
             compact and $a_m(K_n)=1$ for $n<m$. So $a_ma_n=a_n$
             if $n\leqslant m$. Now let $\phi(x,y)=\|(x-1)y\|$.
             Then $\phi(x,a_i)\leqslant\phi(x,a_j)$ and
             $\phi(a_j,a_i)+1\leqslant\phi(a_i,a_j)$ for $i<j$. So
             $\phi$ has SOP. Since $c_0$ has IP, this theory has IP.

             \item [(viii)] $C^*$-algebras are neither reflexive nor
             Rosenthal nor weakly sequentially complete because
             some infinite-dimensional Abelian $^*$-subalgebra lives inside a $C^*$-algebra and so by
             Gelfand-Naimark theorem it has a $C_0(X)$-subalgebra (see \cite{FHS1},
             Lemma~5.3). Note that the Gelfand-Naimark-Segal
             theorem ensure that this class is axiomatizable
             (\cite{FHS1}). So, the theory of $C^*$-algebras has
             SOP. Since $c_0$ has IP, this theory has IP.

             \item [(ix)] The class of (Abelian) tracial von
             Neummman algebra is axiomatizable (see \cite{FHS2}).
             A tracial von Neumman algebra is stable if and only
             if it is of type I. The theory of Abelian tracial von Neumann algebras is stable
             because it is interpretable in the theory of
             probability measure algebras (see above and also
             \cite{FHS1}, Lemma~4.5).
\end{itemize}
\end{exa}

To our knowledge the following observation itself does not appear
somewhere in literature.

\begin{exa}[$\ell_1({\Bbb Z},+)$ with convolution has SOP]  \label{exam2}
In \cite{FHS2} it is shown that the formula
$\phi(x,y)=\inf_{\|z\|\leqslant 1}\|x*z-y\|$ has order property in
$\ell_1({\Bbb Z},+)$. We show that $\phi$ also has SOP. Indeed, we
need to show that: (i) $\phi(x_i,y)\leqslant \phi(x_j,y)$ for
$i\leqslant j$, and (ii) $\phi(x_i,x_i)<\phi(x_j,x_i)$ for $i<j$,
where $x_i$'s are elements of $\ell_1$ as described in
\cite[Proposition~6.2]{FHS2}. It is easy to verify that (ii)
holds. We check that (i) also is true. Indeed, for any $z$,
$\|z\|\leqslant 1$ and any $y$, we have
$|\|y\|-\|z\||\leqslant\sup_t|z(t)-y(t)|\leqslant\|y\|+1$  where
$\|\cdot\|$ is uniform norm on $C({\Bbb T})$.  So, if we assume
that $\|y\|\geqslant 1$, then  $\inf_{\|z\|\leqslant
1}\|z-y\|=|\|y\|-1|$. (Note that we can replace $y$ with
$\max(y,1)$ and so assume that $\|y\|\geqslant 1$. In fact we
define $\phi(x,y)=\inf_{\|z\|\leqslant 1}\|x*z-\max(y,1)\|$.)
Therefore if $i\leqslant j$, then $\inf_{\|z\|\leqslant 1}\|(\cos
t )^{2i}z-y\|\leqslant\inf_{\|z\|\leqslant 1}\|(\cos
t)^{2j}z-y\|$. Equivalently, $\phi(x_i,y)\leqslant \phi(x_j,y)$.
To summarize, $\phi$ has SOP.  A~question arises: {\em Does
$\ell_1({\Bbb Z},+)$ with convolution have NIP?}
\end{exa}

The following interesting fact affirms some of our observations.

\begin{fct}[\cite{Guer}, Theorem III.5.1] Every stable (separable) Banach space is weakly sequentially
complete.
\end{fct}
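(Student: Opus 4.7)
My plan is to combine Shelah's dichotomy (stable $\Leftrightarrow$ NIP $+$ NSOP) with the correspondence summarized at the end of Section~\ref{correspondence} between NSOP of the formula $\phi(x,y)=\|x+y\|$ and weak sequential completeness of the ``Banach space for a formula'' $V$ associated to $\phi$. If $X$ is stable then $\phi$ is stable on $X$ and, in particular, NSOP; by the stated correspondence, $V\subseteq C(S_\phi(X))$ is weakly sequentially complete. This gives the target property at the level of $V$, and the real work is transferring it down to $X$ itself.

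To carry out the transfer, I would take a weak Cauchy sequence $(x_n)\subseteq X$ and note that it is bounded, while each translate $a\mapsto\phi(a,x_n)=\|a+x_n\|$ is $1$-Lipschitz on $X$. Applying the diagonal argument of Lemma~\ref{diagonal-lemma} to separable $X$, I pass to a subsequence for which $\lim_n\|a+x_n\|$ exists for every $a\in X$; equivalently, $\mathrm{tp}_\phi(x_n/X)$ converges in $S_\phi(X)$ to a complete $\phi$-type $p$. The heart of the proof is to show that $p$ is realized in $X$: if $p(a)=\|a+x\|$ for some $x\in X$, then a short Hahn-Banach argument combined with the weak Cauchyness of $(x_n)$ identifies this $x$ with the weak limit in $X$.

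The main obstacle is precisely this realization step, since \emph{a priori} the type $p$ is only realized in an elementary extension, or as a bidual element $x^{**}\in X^{**}\setminus X$. I would handle this by contradiction: assume $x^{**}\notin X$, use Hahn-Banach to separate $x^{**}$ from $X$ by a functional $\Lambda$, and feed $\Lambda$ together with the sequence $(x_n)$ and a suitably chosen sequence $(a_i)\subseteq X$ into the scheme of Fact~\ref{key fact} to manufacture data $(a_i),(b_j)$ with $\lim_i\lim_j\|a_i+b_j\|\neq\lim_j\lim_i\|a_i+b_j\|$, which contradicts stability of $\phi$.

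A cleaner alternative route is indicated by Example~\ref{exam1}(iv): the summing basis of $c_0$ is simultaneously the prototype of a weak Cauchy sequence without weak limit and of a configuration witnessing the order property for $\phi$. One proves that any weak Cauchy, non-weakly-convergent sequence in $X$ contains a subsequence that, after a Bessaga-Pelczynski-style block selection, reproduces the summing-basis pattern; stability of $X$ forbids such a pattern, so no such sequence can exist and $X$ is weakly sequentially complete.
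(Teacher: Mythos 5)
The paper offers no proof of this statement; it is quoted from Guerre-Delabri\`{e}re's book (Theorem III.5.1), so your argument has to stand on its own, and it does not. The step you call the heart of the proof is the wrong target: if $p(a)=\lim_n\|a+x_n\|$ for all $a\in X$ and $p$ is realized by some $x\in X$, then taking $a=-x$ gives $\lim_n\|x_n-x\|=0$, so realization of the limiting norm-type over $X$ is \emph{equivalent} to norm convergence of a subsequence. That is both unattainable in general (the weakly null orthonormal basis of the stable space $\ell_2$ has type $a\mapsto\sqrt{\|a\|^2+1}$, realized nowhere) and not what weak sequential completeness asks for; conversely, when a weak limit does exist it typically does not realize $p$, and the weak${}^*$ limit $x^{**}$ does not realize it either. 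So the proposed Hahn--Banach identification of a realizer with the weak limit has nothing to attach to.

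The contradiction step, which is where the actual theorem lives, is also not supplied. Separating $x^{**}$ from $X$ gives a functional $\Lambda\in X^{***}$, and via Grothendieck this yields a double-limit discrepancy for the duality pairing between $X$ and $X^{*}$ --- not for $\phi(x,y)=\|x+y\|$ with \emph{both} arguments in $X$, which is the only thing stability of the Banach space constrains; converting the former into the latter is precisely the content of the cited theorem. Fact~\ref{key fact} cannot do this for you: it presupposes the order property (and $\emptyset$-saturation, which a general separable stable space need not have); it does not manufacture the order property. The ``cleaner alternative'' is likewise unavailable: a nontrivial weak Cauchy sequence need not admit blocks reproducing the summing basis of $c_0$ --- James's quasi-reflexive space fails weak sequential completeness yet contains no isomorphic copy of $c_0$ --- and in the weaker reading (``some blocks $u_m$ with $\lim_m\lim_n\|u_m+x_n\|\neq\lim_n\lim_m\|u_m+x_n\|$'') the claim simply restates the theorem. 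The opening detour through $V$ and NSOP from Section~\ref{correspondence} contributes nothing to weak sequential completeness of $X$ itself, as you concede, since $V$ sits inside $C(S_\phi(X))$ and is not a copy of $X$. The known proof proceeds through the theory of types and spreading models in stable spaces, and that work is missing here.
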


So, every Banach space containing $c_0$ is unstable. This
directly implies that the theories $C(X)$-spaces, $C_0(X,{\Bbb
C})$-spaces, and $C^*$-algebras are not stable (see (vi)--(viii)
above). (There is even something stronger (see
Proposition~\ref{c_0->IP}).) Of course, the converse does not
hold, e.g. Tsirelson's space  and its dual are unstable but they
are reflexive and so weakly sequentially complete.

\begin{rmk} In \cite{M} the author proved that for $1\leqslant
p<\infty$, $p\neq 2$, the non-comutative $L_p$ space
$L_p({\mathcal M})$ is stable iff $\mathcal M$ is of type I. (See
also \cite[p. 1479]{Hand2}.)  Later, in \cite{FHS1} the authors
showed that a tracial von Neumman algebra is stable if and only
if it is of type I. Two questions arise: {\em Which tracial von
Neumman algebra (or non-comutative $L_p$ spaces) have NIP? Which
of them have NSOP?}
\end{rmk}


\section{Appendix~A: $\aleph_0$-categoricity, continued}   \label{aleph_0
categorical}
In this appendix, we revisit the notion
$\aleph_0$-categoricity. Some observations are not new for model
theorists, but one reason for recalling them is to make the paper
more accessible to other interested readers.


\medskip
Recall that the density character of a topological space $X$, is
the least infinite cardinal number of a dense subset of $X$. When
measuring the size of a structure we will use its density
character (as a metric space), denoted $\|M\|$, rather than its
cardinality. Similarly, for a separable structure $M$, since
$S_{\phi}(M)$ is a metric space, we measure the size
$S_{\phi}(M)$ by its density character $\|S_{\phi}(M)\|$.

\begin{dfn} \label{definition aleph_0}  A complete theory $T$ in a countable language is {\em
$\aleph_0$-categorical} (or {\em $\omega$-categorical}) if it has
an infinite model and any two  models  of size $\aleph_0$ are
isomorphic. Equivalently, $T$ has an infinite model and {\em
separable}  models are isometric. An {\em $\aleph_0$-categorical}
(or {\em $\omega$-categorical}) {\em structure} is a separable
structure $M$ whose theory is $\aleph_0$-categorical. (Compare
with Defintition~\ref{2definition aleph_0}.)
\end{dfn}

Let $L$ be a language and $T$ a complete $L$-theory. Suppose that
$M$ is a model of  $T$ and $A\subseteq M$. Denote the
$L(A)$-structure $(M,a)_{a\in A}$ by $M_A$, and set $T_A$ to be
the $L(A)$-theory of $M_A$.

\begin{rmk} It is easy to check that for a separable model $M$, and
countable subsets $A\subseteq B$ of $M$, if $T_B$ is
$\aleph_0$-categorical then $T_A$ is also $\aleph_0$-categorical
(see \cite{BBHU}, Corollary~12.13); however, the converse does
not hold in a strong form (see Remark~12.14 in \cite{BBHU}).
\end{rmk}







The following fact is folklore, and we remove the proof. Recall
that a formula $\phi$ is stable for a theory $T$ if for every
model $M$ of $T$, $\phi$ is stable on $M$; equivalently, $\phi$
has not the order property   on every model $M$ of $T$.

\begin{fct}[\cite{Ben-Gro}, Corollary~4] \label{stable<->strong}  Let $T$ be a countable theory and $\phi(x,y)$ a formula.
Then the following are equivalent.
\begin{itemize}
             \item [(i)] For every separable model $M$ of $T$, $S_\phi(M)$ is strongly separable.
             \item [(ii)] $\phi$ is stable on every separable model of $T$.
             \item [(iii)] $\phi$ is stable for $T$.
             \item [(iv)] For every model $M$ of $T$, $\|S_\phi(M)\|\leqslant\|M\|$.
\end{itemize}
\end{fct}

\begin{cor} \label{stable=separable}
Let $M$ be an $\aleph_0$-categorical structure. The complete
theory $T$ of $M$ is stable if and only if for every formula
$\phi$, the space $S_\phi(M)$ is strongly separable.
\end{cor}

\begin{proof} If $T$ is stable, every $\phi$-type is definable (\cite[Corollary~4]{Ben-Gro}),
and so $S_\phi(M)$ is strongly separable. The converse follows
from the $\aleph_0$-categoricity and the equivalence
(i)~$\Leftrightarrow$~(iii) of Fact~\ref{stable<->strong} above.
\end{proof}

The fact that Tsirelson's space can not be $\aleph_0$-categorical
is a consequence of Corollary~\ref{main theorem}. Alternatively,
it is clear by instability and the above fact:

\begin{cor}   \label{strong-not-categorical}
Suppose that $T$ is the complete theory of Tsirelson's space
$M_{\mathcal{T}}$ (in a countable language). Then the followings
hold:

(i) $T$ is not $\aleph_0$-categorical.

(ii) There exists a separable model $M$ of $T$ which its type
space is not strongly separable.
\end{cor}

\begin{proof} (i): Suppose, if possible, that $T$ is
$\aleph_0$-categorical. Then, since $S_\phi(M)$ is
$\rho$-separable where $\phi(x,y)=\|x+y\|$, so $\phi(x,y)$ is
stable on $M_{\mathcal{T}}$. By  Corollary~\ref{unstable}, this
is a contradiction.

 (ii): Immediate from
Fact~\ref{stable<->strong} and Corollary~\ref{unstable}.
(Clearly, this model is different from Tsirelson's space by
Fact~\ref{Odell-fact}.)
\end{proof}

\begin{rmk} Note that the $\aleph_0$-categorical assumption in
Corollary~\ref{stable=separable} is too strong and a weaker
assumption is sufficient. Indeed, suppose that $M$ is a  ${\frak
p}_{BM}$-saturated structure, where ${\frak p}_{BM}$ is the
Banach--Mazur perturbation system. (See
Definitions~\ref{Banach--Mazur perurbation}, \ref{p-saturation}
above.) Then, it is easy to show that, $M$ is  stable if and only
if for every formula $\phi$ the space $S_\phi(M)$ is strongly
separable.
\end{rmk}

\section{Appendix~B: A remark on Rosenthal's dichotomy}   \label{appendix}

Note that a well-known result of Rosenthal (Lemma~\ref{lemma-1}
below) is used in the proof of the direction
(i)~$\Rightarrow$~(v) of Lemma~\ref{equivalence} (see Lemma~3.12
in \cite{K3}). So it is worthwhile to study it more carefully.

\begin{dfn} \label{indep} \begin{itemize}
             \item [(i)] A sequence $\{f_n\}$ of real valued functions on a set $X$ is said to be {\em independent }  if there exist real numbers $s<r$ such that
                         $$\bigcap_{n\in P} f_n^{-1}(-\infty,s)\cap\bigcap_{n\in M} f_n^{-1}(r,\infty)\neq\emptyset \ \ \ \ \ \ \ \  \boxtimes$$
                         for all finite disjoint subsets $P,M$ of $\mathbb
                         N$. A family $F$ of real valued functions on $X$ is called {\em independent} if it contains an independent sequence; otherwise it is called
                         {\em strongly} (or {\em completely}) {\em dependent}.
             \item [(ii)] We say that the sequence $\{f_n\}$ is {\em strongly} (or {\em completely})
                          {\em independent} if $\boxtimes$ holds for all {\em infinite}
                          disjoint subsets $P,M$ of $\Bbb N$. A family $F$ of real valued functions is called {\em strongly} (or {\em completely})
                          {\em independent} if it contains a strongly independent sequence; otherwise it is called {\em dependent}.
\end{itemize}
\end{dfn}

It is an easy exercise to check that when $X$ is a compact space
and functions are continuous the above two notions are the same,
but this does not hold in general (see Example~\ref{c_0} below).

\begin{lem} \label{dependent=strong dependent}
Let $X$ be a {\em compact} space and $F\subseteq C(X)$ a bounded
subset. Then the following conditions are equivalent:
\begin{itemize}
             \item [(i)]  $F$ is dependent.
             \item [(ii)] $F$ is strongly dependent.
\end{itemize}
\end{lem}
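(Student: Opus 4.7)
The direction (i)$\Rightarrow$(ii) is essentially free: if $F$ contained a strongly independent sequence $\{f_n\}$ with witnesses $s<r$, then for any \emph{finite} disjoint $P,M\subseteq\Nn$ one can extend them to a pair of \emph{infinite} disjoint subsets $P'\supseteq P$, $M'\supseteq M$ (by splitting $\Nn\setminus(P\cup M)$ into two infinite pieces), and then any $x$ witnessing $\boxtimes$ for $(P',M')$ also witnesses it for $(P,M)$. Hence an independent sequence exists, and $F$ would fail to be dependent. So the work is all in (ii)$\Rightarrow$(i).

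My plan for (ii)$\Rightarrow$(i) is to prove the contrapositive by a standard compactness argument. Suppose $F$ is not dependent, so it contains an independent sequence $\{f_n\}$ with witnesses $s<r$. The natural temptation is to apply finite intersection to the sets $f_n^{-1}(-\infty,s)$ and $f_n^{-1}(r,\infty)$ directly, but these are open, so compactness of $X$ gives nothing. The fix is to absorb the strict inequalities into closed sets by interposing two levels between $s$ and $r$: pick $s<t_1<t_2<r$ and set
\[
C_n:=f_n^{-1}\bigl((-\infty,t_1]\bigr),\qquad D_n:=f_n^{-1}\bigl([t_2,\infty)\bigr).
\]
Because $f_n\in C(X)$ and $X$ is compact, $C_n$ and $D_n$ are closed. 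Since $f_n^{-1}(-\infty,s)\subseteq C_n$ and $f_n^{-1}(r,\infty)\subseteq D_n$, the independence hypothesis gives that for every finite disjoint $P,M\subseteq\Nn$,
\[
\bigcap_{n\in P}C_n\cap\bigcap_{n\in M}D_n\neq\emptyset.
\]

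Now I would invoke compactness of $X$: the family $\{C_n:n\in P\}\cup\{D_n:n\in M\}$ has the finite intersection property for \emph{any} disjoint $P,M\subseteq\Nn$ (not necessarily finite), hence its total intersection is nonempty. To convert the resulting $\leq t_1$ and $\geq t_2$ back into \emph{strict} inequalities, I choose $s^*,r^*$ with $t_1<s^*<r^*<t_2$ (e.g.\ $s^*=(2t_1+t_2)/3$, $r^*=(t_1+2t_2)/3$). Then for every infinite disjoint $P,M\subseteq\Nn$, any $x$ in the intersection satisfies $f_n(x)\leq t_1<s^*$ for $n\in P$ and $f_n(x)\geq t_2>r^*$ for $n\in M$. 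So the same sequence $\{f_n\}$ is strongly independent with the adjusted witnesses $s^*<r^*$, and $F$ is not strongly dependent.

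The only subtle point, and the one I would be careful about, is precisely the strict/nonstrict inequality issue: one cannot use $C_n=f_n^{-1}((-\infty,s])$ directly, because the hypothesis does not give that these closed sets have the finite intersection property. Introducing the auxiliary levels $t_1<t_2$ separates the ``use compactness'' step from the ``recover strict inequalities'' step, and after that the argument is routine.
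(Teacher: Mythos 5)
Your proof is mathematically complete and is exactly the ``easy exercise'' the paper has in mind (the paper offers no proof of this lemma): you show that a strongly independent sequence is independent, and conversely that, for continuous functions on a compact $X$, an independent sequence is strongly independent after passing to closed sub/superlevel sets, using the finite intersection property and then shrinking the witness interval to recover strict inequalities; the two implications together give the stated equivalence. One caveat: the paper's nomenclature in Definition~\ref{indep} is crossed relative to the convention you are implicitly using, so your direction labels are interchanged. In the paper, a family is \emph{strongly dependent} when it contains no independent sequence and \emph{dependent} when it contains no strongly independent sequence; hence the ``free'' implication you prove first is really (ii)$\Rightarrow$(i), and your compactness argument proves (i)$\Rightarrow$(ii). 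In particular the sentence ``Suppose $F$ is not dependent, so it contains an independent sequence'' does not unfold the paper's definition correctly, although the implication you actually establish (independent sequence yields strongly independent sequence) is precisely the nontrivial one needed, so no mathematical content is missing once the labels are swapped. A small further quibble: your closing remark is not quite right, since the closed sets $C_n=f_n^{-1}\bigl((-\infty,s]\bigr)$ and $D_n=f_n^{-1}\bigl([r,\infty)\bigr)$ do have the finite intersection property (they contain the open sets furnished by the hypothesis); the only genuine issue with using them is recovering the strict inequalities at the end, which can be done by choosing new witnesses $s<s^*<r^*<r$ exactly as you do, so the interposed levels $t_1<t_2$ are a convenience rather than a necessity.
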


\begin{lem}[Rosenthal's lemma] \label{lemma-1} Let $X$ be a compact space and $F\subseteq C(X)$ a bounded subset. Then the following conditions are equivalent:
\begin{itemize}
             \item [(i)]  $F$ does not contain an independent subsequence.
             \item [(ii)] Each sequence in $F$ has a  convergent subsequence
in $\mathbb R^X$.
\end{itemize}
\end{lem}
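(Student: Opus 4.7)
This is Rosenthal's classical $\ell_1$-dichotomy reformulated in topological terms, and I would prove it along the standard two-direction route, handling the easier direction first.

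For (ii) $\Rightarrow$ (i), I proceed by contrapositive. Suppose $F$ contains an independent sequence $(f_n)$ with witnesses $s<r$, and let $(f_{n_k})$ be any subsequence. Applying the defining condition $\boxtimes$ to the alternating disjoint finite index sets $P_N=\{1,3,\ldots,2N-1\}$ and $M_N=\{2,4,\ldots,2N\}$ yields points $x_N\in X$ with $f_{n_{2j-1}}(x_N)<s$ and $f_{n_{2j}}(x_N)>r$ for every $j\le N$. Compactness of $X$ gives a cluster point $x^*$ of the net $(x_N)$, and continuity of each $f_{n_k}$ — together with passage to a convergent subnet — forces $f_{n_{2j-1}}(x^*)\le s<r\le f_{n_{2j}}(x^*)$ for every $j$. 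Thus $(f_{n_k}(x^*))_k$ oscillates across a fixed gap and cannot converge, so no subsequence of $(f_n)$ is pointwise convergent.

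For the main direction (i) $\Rightarrow$ (ii), the plan is again by contrapositive: from a bounded sequence $(f_n)\subseteq F$ admitting no pointwise convergent subsequence, extract an independent subsequence. The engine is Rosenthal's combinatorial lemma — any sequence of subsets of a set admits a subsequence whose indicator functions converge pointwise, or which is Boolean independent. First I would apply this, with a diagonal extraction over the rational parameters, to the sublevel sets $\{f_n\le s\}$ as $s$ ranges over the rationals in a bounding interval $[-M,M]$ of $F$. If the convergence alternative held at every rational $s$ for some subsequence, the diagonal argument would produce a subsequence $(f_{n_k})$ such that $1_{\{f_{n_k}\le s\}}$ converges pointwise on $X$ for every rational $s$; at each $x\in X$ this forces $\liminf_k f_{n_k}(x)=\limsup_k f_{n_k}(x)$, since any rational strictly between them would yield an oscillating, nonconvergent indicator — contradicting the hypothesis. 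Hence the Boolean-independence alternative must occur at some rational level $s_0$ along some subsequence. I would then perform a second application of Rosenthal's lemma to the superlevel sets $\{f_n\ge r\}$ inside this subsequence, diagonalizing over rationals $r>s_0$, to secure joint Boolean independence of a disjoint pair $(\{f_{n_k}\le s_0\},\{f_{n_k}\ge r_0\})$ for some $r_0>s_0$. Definition~\ref{indep} is then met by any choice of rationals $s_0<s<r<r_0$, since the inclusions $\{f_n\le s_0\}\subseteq\{f_n<s\}$ and $\{f_n\ge r_0\}\subseteq\{f_n>r\}$ propagate all the nonempty Boolean intersections.

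The central technical obstacle is this second step — upgrading single-level Boolean independence to joint Boolean independence of a disjoint pair with a genuine gap. A naive iteration of the single-set Rosenthal lemma produces independence for one family and mere convergence for the other, not joint independence of the pair. The standard fix is to run Rosenthal's lemma a second time on the superlevel sets \emph{inside} the subsequence already secured at level $s_0$, and to combine the resulting alternatives with the forced-convergence argument above: if joint independence failed at every rational $r>s_0$ then a further diagonal extraction would again produce pointwise convergence along a sub-subsequence, contradicting the hypothesis. Carefully tracking which properties survive each extraction is the finicky but standard bookkeeping in Rosenthal's $\ell_1$ proof.
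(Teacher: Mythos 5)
Your direction (ii)$\Rightarrow$(i) is correct (and it is exactly where compactness of $X$ and continuity of the $f_n$ enter). The genuine gap is in (i)$\Rightarrow$(ii), in your second stage. Having located one rational level $s_0$ at which the sublevel sets $\{f_{n_k}\le s_0\}$ are Boolean independent, you claim that if joint independence of the disjoint pair $(\{f_{n_k}\le s_0\},\{f_{n_k}\ge r\})$ fails for every rational $r>s_0$, then a diagonal extraction over those $r$ produces a pointwise convergent sub-subsequence, contradicting the hypothesis. That inference is false: the convergence alternative for the pair at level $(s_0,r)$ only says that at each $x$ the values are not infinitely often $\le s_0$ \emph{and} infinitely often $\ge r$; it puts no constraint on oscillation confined to $(-\infty,s_0]$ or to $(s_0,\infty)$, so no pointwise convergence follows. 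Worse, the level $s_0$ found in stage one need not be the level above which an independence gap sits. Concretely, on $X=\{0,1\}^{\mathbb N}$ let $f_n(\omega)=s_0-1$ if $\omega_n=0$ and $f_n(\omega)=s_0+\frac{1}{n}$ if $\omega_n=1$: no subsequence converges pointwise, the sets $\{f_n\le s_0\}=\{\omega:\omega_n=0\}$ are Boolean independent (so your stage one may legitimately return the level $s_0$), yet for every fixed $r>s_0$ the sets $\{f_n\ge r\}$ are empty for all large $n$, so joint independence above $s_0$ fails along every subsequence while no convergent subsequence appears either; the independent subsequence promised by the lemma has its witnesses \emph{below} $s_0$ (e.g.\ $s=s_0-\frac12<r=s_0$). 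So your procedure can terminate with neither alternative in hand.

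The repair is to drop the two-stage structure and treat all thresholds symmetrically: for every pair of rationals $s<r$ apply the pair version of Rosenthal's combinatorial lemma to the disjoint sets $(\{f_n<s\},\{f_n>r\})$, and diagonalize over the countably many pairs at once. If the convergence alternative holds for every pair, your liminf/limsup argument (verbatim, choosing a rational pair strictly between the two limits) shows the diagonal subsequence converges pointwise, contradicting the hypothesis; hence the independence alternative holds for some single pair $(s_0,r_0)$ along some subsequence, and that is literally the condition of Definition~\ref{indep} with witnesses $s_0<r_0$ --- no upgrading step is needed. (For comparison: the paper gives no proof of this lemma, quoting it from Rosenthal's paper, so there is no in-text argument to measure yours against; note also that this direction uses neither compactness nor continuity, which matches the paper's remark that only the equivalence with \emph{strong} independence requires compactness.)
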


Rosenthal \cite{Ros} used the above lemma for proving his famous
$\ell_1$ theorem: a sequence in a Banach space is either `good'
(it has a subsequence which is weakly Cauchy) or `bad' (it
contains an isomorphic copy of $\ell_1$). We will shortly discuss
this topic (see below).

\begin{fct}[\cite{Ros}] \label{fact3} If $X$ is a compact space and
$\{f_n\}$ a pointwise bounded sequence in $C(X)$, then
\begin{itemize}
             \item [(1)]  either $\{f_n\}$ has a (pointwise) convergent subsequence, or

             \item [(2)] $\{f_n\}$ has a $\ell_1$-subsequence, equivalently, $\{f_n\}$
has an independent subsequence.
\end{itemize}
\end{fct}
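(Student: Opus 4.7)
The plan is to derive the dichotomy directly from Rosenthal's lemma (Lemma~\ref{lemma-1}) and then check that an independent subsequence in $C(X)$ is the same as an $\ell^1$-subsequence. We may assume $\{f_n\}$ takes infinitely many distinct values (otherwise a constant subsequence gives (1)). Apply Lemma~\ref{lemma-1} to the bounded set $F=\{f_n:n\in\mathbb{N}\}\subseteq C(X)$. If $F$ contains no independent subsequence, then by the lemma every sequence in $F$---in particular $\{f_n\}$ itself---has a pointwise convergent subsequence in $\mathbb{R}^X$, giving alternative~(1). Otherwise $F$ contains an independent sequence $\{g_k\}$; since any two members of an independent sequence must be distinct (they are forced to take values on opposite sides of the gap on the same point of $X$), after reindexing we may assume $g_k=f_{n_k}$ for some strictly increasing $n_1<n_2<\cdots$, producing an independent subsequence of $\{f_n\}$.

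Next I would show that a bounded independent sequence $\{g_k\}\subseteq C(X)$ is equivalent to the unit vector basis of $\ell^1$. Let $s<r$ witness the independence (Definition~\ref{indep}(i)) and fix scalars $c_1,\dots,c_N$. Partition the nonzero coefficients by sign: $P=\{i\leqslant N:c_i>0\}$ and $M=\{i\leqslant N:c_i<0\}$. Applying $\boxtimes$ to $(P,M)$ and separately to $(M,P)$ gives points $x^-,x^+\in X$ with
\[
g_i(x^-)>r,\ g_i(x^+)<s\ (i\in P),\qquad g_i(x^-)<s,\ g_i(x^+)>r\ (i\in M).
\]
Subtracting yields $\sum_i c_i\bigl(g_i(x^-)-g_i(x^+)\bigr) > (r-s)\sum_i|c_i|$, so $\|\sum c_ig_i\|_\infty \geqslant \tfrac{r-s}{2}\sum|c_i|$. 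Combined with the trivial upper bound $\|\sum c_ig_i\|_\infty \leqslant (\sup_k\|g_k\|_\infty)\sum|c_i|$, this exhibits $\{g_k\}$ as $\ell^1$-equivalent, so $\{f_n\}$ has an $\ell^1$-subsequence.

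For the reverse direction of the ``equivalently'', starting from an $\ell^1$-equivalent $\{g_k\}$, I would apply the $\ell^1$-lower bound to the sign-pattern $c_i=+1$ on $P$, $c_i=-1$ on $M$ for arbitrary finite disjoint $P,M\subseteq\mathbb{N}$, and use the uniform bound $K=\sup_k\|g_k\|_\infty$ to produce a point $x\in X$ at which the values $g_i(x)$ are forced to split across a gap $r>s$ depending only on the $\ell^1$-constant and $K$; this is exactly the defining inequality $\boxtimes$ of Definition~\ref{indep}. The main obstacle is the quantitative bookkeeping in passing from independence to the $\ell^1$-lower bound \emph{uniformly} in all finite scalar tuples---the essential observation is that a \emph{single} pair of witnesses $x^\pm$ chosen \emph{after} the sign pattern of the coefficients is fixed is enough, which is precisely what the ``for all finite disjoint $P,M$'' clause in Definition~\ref{indep} supplies.
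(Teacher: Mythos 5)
The paper itself states this Fact without proof (it is Rosenthal's classical result, intended to follow from Lemma~\ref{lemma-1} plus the standard equivalence between combinatorial independence and $\ell^1$-behaviour), and that is exactly the route you take. Your first step (no independent subsequence $\Rightarrow$ alternative (1), via Lemma~\ref{lemma-1}, and extraction of an honest independent \emph{subsequence} otherwise) is fine, and your proof that a uniformly bounded independent sequence is equivalent to the unit vector basis of $\ell^1$ is correct and complete: choosing the two witnesses $x^{\pm}$ \emph{after} fixing the sign pattern, the estimate $\sum_i c_i\bigl(g_i(x^-)-g_i(x^+)\bigr)>(r-s)\sum_i|c_i|$, and the conclusion $\bigl\|\sum_i c_ig_i\bigr\|_\infty\geqslant\frac{r-s}{2}\sum_i|c_i|$ are all in order. (One caveat, shared with the paper's formulation: the Fact assumes only pointwise boundedness, while Lemma~\ref{lemma-1} and your upper estimate $\sup_k\|g_k\|_\infty<\infty$ use norm boundedness; ``bounded'' has to be read uniformly for the $\ell^1$ half to make sense.)

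The genuine gap is in your sketch of the reverse implication of ``equivalently''. Applying the lower $\ell^1$-estimate to the sign pattern $+1$ on $M$, $-1$ on $P$ only yields a point $x$ with $\bigl|\sum_{i\in M}g_i(x)-\sum_{i\in P}g_i(x)\bigr|\geqslant\delta(|P|+|M|)$. First, the supremum may be attained with the wrong orientation; second, and more seriously, even with the favourable sign this is an \emph{averaged} statement: since $|g_i(x)|\leqslant K$, it forces most, but not all, indices to lie on the correct side of any putative gap, and a single rogue $i_0\in M$ with $g_{i_0}(x)$ small can be compensated by the other indices once $|P|+|M|$ is large. So the condition $\boxtimes$, which requires \emph{every} $i\in P$ and \emph{every} $i\in M$ to split at a single point with a gap $s<r$ fixed in advance, is not ``exactly what the estimate supplies''; this is not mere bookkeeping. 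The implication is nevertheless true, by a different argument: every subsequence of a sequence equivalent to the $\ell^1$-basis is again equivalent to it, hence is not weakly Cauchy; on the other hand a uniformly bounded pointwise convergent sequence in $C(X)$, $X$ compact, is weakly Cauchy (dominated convergence against the Radon measures representing $C(X)^*$). Hence an $\ell^1$-subsequence admits no pointwise convergent further subsequence, and Lemma~\ref{lemma-1} then produces an independent subsequence. With this replacement your argument is complete; note that one should not instead invoke Fact~\ref{fact1} here, since the paper derives that theorem from the present Fact.
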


Note that for non-compact spaces, Fact~\ref{fact3} is not a
dichotomy. Michael Megrelishvili
 informed us with details about this
observation (see  Example \ref{c_0,1} above and Example \ref{c_0}
below)  and also pointed out to an Example which is in
\cite{Dulst}. Of course, we are not sure that the observation
itself (that Fact~\ref{fact3} is not always dichotomy in
noncompact Polish case) appears somewhere in the literature in a
clear form. For compact space, independence and strong
independence are the same.

\begin{fct}[Rosenthal's $\ell_1$-theorem] \label{fact1}  If $(x_n)$ is a bounded sequence in a Banach
space $V$ then
\begin{itemize}
             \item [(1)$'$]  either $(x_n)$ has a weakly Cauchy subsequence, or
             \item [(2)$'$] $(x_n)$ has a $\ell_1$-subsequence.
\end{itemize}
\end{fct}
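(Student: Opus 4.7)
The plan is to reduce the statement to the compact-space dichotomy already available in the excerpt (Lemma~\ref{lemma-1}, Rosenthal's lemma) by moving from a bounded sequence in $V$ to a bounded sequence of continuous functions on a suitable compact space, and then to upgrade an independent subsequence to an isomorphic copy of $\ell^1$ by a dual-functional averaging argument.

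First I would let $K = B_{V^*}$ equipped with the weak$^*$ topology, which is compact Hausdorff by the Banach--Alaoglu theorem. The canonical embedding $x \mapsto \hat{x}$, with $\hat{x}(f) = f(x)$, sends $V$ isometrically into $C(K)$, so the sequence $\{\hat{x}_n\} \subseteq C(K)$ is bounded in sup-norm by $\sup_n \|x_n\|$. Apply Lemma~\ref{lemma-1} to $\{\hat{x}_n\}$. Either $\{\hat{x}_n\}$ has a subsequence $\{\hat{x}_{n_k}\}$ converging pointwise on $K$, or $\{\hat{x}_n\}$ has an independent subsequence. In the first case, pointwise convergence on $B_{V^*}$ combined with uniform boundedness gives $f(x_{n_k}) \to $ a limit for every $f \in V^*$ (rescale an arbitrary $f$ into $B_{V^*}$), which is exactly (1)$'$, that $(x_{n_k})$ is weakly Cauchy.

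It remains to handle the second case: an independent subsequence $\{\hat{x}_{n_k}\}$ with witnessing constants $s < r$. I would relabel and write $y_k = x_{n_k}$. Given scalars $a_1,\ldots,a_N$, split the index set into $M = \{k : a_k \geq 0\}$ and $P = \{k : a_k < 0\}$. By independence there is $f \in B_{V^*}$ with $f(y_k) > r$ for $k \in M$ and $f(y_k) < s$ for $k \in P$, and also $g \in B_{V^*}$ with the roles of $M,P$ swapped. A direct computation gives
\[
f\Bigl(\sum_k a_k y_k\Bigr) - g\Bigl(\sum_k a_k y_k\Bigr) \;\geq\; (r-s)\sum_k |a_k|,
\]
while the left side is at most $2\bigl\|\sum_k a_k y_k\bigr\|$ since $\|f\|,\|g\| \leq 1$. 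Hence $\bigl\|\sum_k a_k y_k\bigr\| \geq \tfrac{r-s}{2}\sum_k |a_k|$, and the reverse inequality $\sum_k |a_k|\, \sup_n \|x_n\|$ is trivial, so $(y_k)$ is equivalent to the unit vector basis of $\ell^1$, which is (2)$'$.

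The main obstacle I anticipate is the passage from independence in $C(K)$ to the $\ell^1$-equivalence of the original vectors, i.e.\ the two-functional trick above; every other step is essentially a transcription of Rosenthal's lemma across the canonical embedding $V \hookrightarrow C(B_{V^*})$. One has to be careful that the notion of ``independent subsequence'' in Lemma~\ref{lemma-1} is the one witnessed on the compact space $K$, and then translate those witnessing points in $B_{V^*}$ back to functionals on $V$, which is immediate since points of $K$ \emph{are} the functionals.
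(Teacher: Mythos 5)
Your proof is correct and takes essentially the same route as the paper, which likewise reduces to the compact-space result by setting $f_n(x^*)=x^*(x_n)$ on $B^*=B_{V^*}$ with its weak* topology. The only difference is that the paper cites Fact~\ref{fact3}, in which the equivalence of an independent subsequence with an $\ell^1$-subsequence is already built in, whereas you invoke Lemma~\ref{lemma-1} and supply that upgrade yourself through the two-functional estimate $\bigl\|\sum_k a_k y_k\bigr\|\geqslant \tfrac{r-s}{2}\sum_k|a_k|$, which is a correct and standard computation.
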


 Fact~\ref{fact1} is a consequence of
Fact~\ref{fact3}. Indeed, let $B^*$ be the unit ball of $V^*$,
and define $f_n:B^*\to{\Bbb R}$ by $x^*\mapsto x^*(x_n)$ for all
$x^*\in B^*$.

\begin{fct}[Rosenthal's dichotomy, \cite{Tod}] \label{fact2} If $X$ is a Polish space and
$\{f_n\}$ a pointwise bounded sequence in $C(X)$, then
\begin{itemize}
             \item [(1)$''$]  either $\{f_n\}$ has a (pointwise) convergent subsequence, or
             \item [(2)$''$] $\{f_n\}$ has a subsequence whose its closure in ${\Bbb
R}^X$ is homeomorphic to $\beta\Bbb N$, equivalently, it has a
strong independent subsequence.
\end{itemize}
\end{fct}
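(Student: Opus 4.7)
The plan is to derive the Polish dichotomy from Rosenthal's lemma (Lemma~\ref{lemma-1}) augmented by a refinement that uses the Polish structure of $X$ essentially. First I would apply Lemma~\ref{lemma-1} to $\{f_n\}$, producing either a pointwise convergent subsequence (giving (1)$''$) or an independent subsequence $\{f_{n_k}\}$ with witnessing reals $s<r$: for every finite disjoint $P,M\subset\mathbb{N}$ the open set
\[
U_{P,M}\;=\;\bigcap_{k\in P} f_{n_k}^{-1}(-\infty,s)\ \cap\ \bigcap_{k\in M}f_{n_k}^{-1}(r,\infty)
\]
is nonempty. The heart of the argument is to upgrade this to \emph{strong} independence, that is, to pass to a further subsequence for which the analogous intersection is nonempty for all \emph{infinite} disjoint $P,M$. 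In the compact case this upgrade is automatic by Lemma~\ref{dependent=strong dependent}; in general it fails, as the remark following Definition~\ref{indep} already warns. Here the Polish hypothesis enters: each $U_{P,M}$ is open in the Polish space $X$, and a Baire-category/Ramsey-style extraction on the countable Boolean algebra generated by the $U_{P,M}$, following the Bourgain--Fremlin--Talagrand strategy, produces a subsequence along which the infinite-intersection condition holds.

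Once strong independence is secured, the equivalence with $\overline{\{f_{n_k}\}}\cong\beta\mathbb{N}$ is a formal construction. Define $\Phi:\beta\mathbb{N}\to\mathbb{R}^X$ by $\Phi(\mathcal{U})(x)=\lim_{k\to\mathcal{U}} f_{n_k}(x)$, which is well-defined since pointwise boundedness makes each coordinate sequence a bounded real sequence, whose ultrafilter limit always exists. Continuity of $\Phi$ is immediate coordinatewise from the definition of the product topology. Injectivity is precisely strong independence: distinct ultrafilters $\mathcal{U}\neq\mathcal{V}$ can be separated by some $A\subset\mathbb{N}$, and at any witness $x$ produced by the strong independence property applied to infinite disjoint sets approximating $A$ and its complement, one has $|\Phi(\mathcal{U})(x)-\Phi(\mathcal{V})(x)|\geqslant r-s$. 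A continuous injection between compact Hausdorff spaces is a homeomorphism onto its image, which is the closure of $\{f_{n_k}\}$ in $\mathbb{R}^X$.

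The main obstacle is clearly the upgrade from independent to strongly independent: this is where the Polish hypothesis on $X$ is used essentially, and it is the step that distinguishes Fact~\ref{fact2} from the merely compact Fact~\ref{fact3}. Without Polishness there can be independent sequences whose closure does not contain a copy of $\beta\mathbb{N}$ (precisely the content of the appendix's remark that the compact dichotomy of Fact~\ref{fact3} fails in the noncompact Polish case when only independence is requested), so the strengthened conclusion (2)$''$ genuinely requires an extra topological input beyond what Rosenthal's lemma alone delivers.
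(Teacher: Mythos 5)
The central step of your argument --- upgrading an independent subsequence to a \emph{strongly} independent one by a Baire-category/Ramsey extraction, using only Polishness of $X$ --- is not merely unproved, it is false, and the paper itself contains the counterexample. In Example~\ref{c_0}, on the Polish (non-compact) space $B_{c_0}$ the sequence $f_n(x)=\|x+e_n\|$ is independent in the sense of Definition~\ref{indep}, yet it converges pointwise to $x\mapsto\max(\|x\|,1)$; since every subsequence of a strongly independent sequence is again strongly independent and can never be pointwise convergent (split its index set into two infinite disjoint pieces and evaluate at a witnessing point to get values separated by at least $r-s$ infinitely often), no subsequence of $(f_n)$ is strongly independent. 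Hence no extraction procedure can pass from independence to strong independence on a Polish space. The logical structure of the dichotomy is different: the hypothesis from which one must produce a $\beta\Bbb N$-subsequence is the \emph{failure} of alternative (1)$''$ --- no subsequence converges pointwise --- and not the mere presence of an independent subsequence; off compact spaces the two alternatives of the combinatorial Rosenthal lemma are not mutually exclusive, which is exactly the point of the appendix. Note also that Lemma~\ref{lemma-1} is stated for compact $X$, so even your first step needs the purely set-theoretic version of Rosenthal's lemma valid over an arbitrary set.

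For comparison: the paper offers no proof of Fact~\ref{fact2}; it is quoted from Bourgain--Fremlin--Talagrand \cite{BFT}, whose argument is of a genuinely different nature. There one assumes that no subsequence converges pointwise and uses the deep category-theoretic analysis of pointwise compact sets of Baire-measurable functions on Polish spaces (their Lemma~2B, Theorem~2F, Corollary~4G, as cited in the remark following the statement) to extract a subsequence whose pointwise closure in $\Bbb R^X$ is homeomorphic to $\beta\Bbb N$, equivalently a strongly independent subsequence. Your closing construction of $\Phi:\beta\Bbb N\to\Bbb R^X$ by ultrafilter limits, showing that a strongly independent sequence has closure homeomorphic to $\beta\Bbb N$, is essentially correct (modulo checking injectivity at principal ultrafilters as well), but it is the routine half; the heart of the dichotomy is precisely the category argument that your proposed upgrade replaces with an impossible step.
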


\begin{rmk} Note that (1)$'$~$\Rightarrow$~(1)=(1)$''$, but
(1)$''$~$\nRightarrow$~(1)$'$ in general. Also,
(2)$''$~$\Rightarrow$~(2)$'$, but (2)$'$~$\nRightarrow$~(2)$''$
(see below). In fact (2)$'$ is equivalent to independence
property.  For Polish space $X$, (2)$''$ holds iff there is a
compact subset $K\subseteq X$ such that $\{f_n|_K\}$ contains a
strong independent subsequence, equivalently, $\{f_n\}$ has a
strong independent subsequence (see \cite{BFT}, Lemma~2B,
Theorem~2F and Corollary~4G).
\end{rmk}

\begin{exa}  \label{c_0} We revisit Example
\ref{c_0,1}. Recall that the family ${\mathcal F}_{c_0}$ (in
Example~\ref{c_0,1}) contains an independent sequence.  (i) The
family ${\mathcal F}_{c_0}$ is weakly precompact in ${\Bbb
R}^{B_{c_0}}$. Indeed, since $c_0$ is separable and the functions
$f_a$ are $1$-Lipschitz, so by a diagonal argument (see
Lemma~\ref{diagonal-lemma}), every sequence has a point-wise
convergent subsequence. This shows that Fact~\ref{fact3} is not a
dichotomy for non-compact spaces. (ii) By Fact~\ref{fact1}, the
sequence $(f_n)$ has not a weakly Cauchy subsequence. (iii) By
Fact~\ref{fact2}, $(f_n)$ has not a strong independent
subsequence.
\end{exa}

\begin{rmk} (i) Define $f_n:B_{c_0}\to[0,2]$ by
$f_n(x)=\|x+s_n\|$. Then $f_n$ converges to the continuous
function $f(x)=1+\|x\|$. Now, let $\widehat{c_0}$ be the
\v{C}ech-Stone compactification of $c_0$ and $\hat{f_n},\hat{f}$
be the extensions of $f_n,f$ respectively. Then
$\hat{f_n}\nrightarrow \hat{f}$. Indeed, note that
$\hat{f_n}|_{c_0}=f_n$ and $\hat{f}|_{c_0}=f$, and
$\hat{f}(e)=\lim_m\hat{f}(e_m)=\lim_mf(e_m)=\lim_m\lim_nf_n(e_m)=2\neq
1=\lim_n\lim_mf_n(e_m)=\lim_n\lim_m\hat{f_n}(e_m)=\lim_n\hat{f_n}(e)$
where $(e_m)$ is the standard basis of $c_0$ and $e$ is a cluster
point of $(e_m)$ in $\widehat{c_0}$. Note that $c_0$ is dense in
$\widehat{c_0}$ and $f_n\to f$, but
${\hat{f_n}}\nrightarrow\hat{f}$. (ii) Define $g_n(x)=\|x+e_n\|$.
Then $g_n\to g$ where $g(x)=\max(\|x\|,1)$. But $(\hat{g_n})$
does not contain a convergent subsequence; because $(g_n)$ has
independence property (see Example~\ref{c_0} above). (iii) $(g_n)$
is a poinwise convergent sequence but it has not a weakly Cauchy
subsequence. Because, by Rosenthal $\ell_1$-theorem, either a
sequence has a weakly Cauchy subsequence, or it has a
$\ell_1$-subsequence (equivalently, it contains an independent
subsequence). Note that weakly Cauchy is stronger than pointwise
convergence, in general. But, for locally compact spaces, they
are equivalent.
\end{rmk}

\end{document}